\documentclass[11pt]{amsart}
\usepackage{amsfonts}

\usepackage[all,cmtip]{xy}
\usepackage{amsmath}
\usepackage{amsthm}
\usepackage{amssymb}
\usepackage{enumitem}	
\newtheorem{thm}{Theorem}[section]
\newtheorem{conj}{Conjecture}[section]
\newtheorem{prop}[thm]{Proposition}
\newtheorem{definition}         {Definition}
\newtheorem{lemma}[thm]{Lemma}

\newtheorem{lem}[thm]{Lemma}
\newtheorem{cor}[thm]{Corollary}

\newtheorem{remark}{Remark}
\theoremstyle{remark}

\newcommand*{\Q}{\mathbb{Q}}

\newcommand*{\Z}{\mathbb{Z}}

\newcommand*{\F}{\mathbb{F}}

\newcommand*{\Gal}{\textrm{Gal}}

\newcommand*{\ra}{\rightarrow}
\newcommand*{\Aut}{\textrm{Aut}}
\newcommand*{\ol}{\overline}
\newcommand*{\frob}{\mathrm{Frob}}
\newcommand*{\ZZ}{\mathbb{Z}}

\def\Sur{{\rm Surj}}
\def\Tor{{\rm Tor}}

\def\Pic{{\rm Pic}}
\def\GL{{\rm GL}}
\def\Sp{{\rm Sp}}
\def\GSP{{\rm GSp}}

\def\End{{\rm End}}
\def\coker{{\rm Coker}}
\def\Hom{{\rm Hom}}
\def\ker{{\rm Ker}}
\def\im{{\rm Im}}
\def\avg{{\rm Avg}}
\def\Conf{{\rm Conf}}
%
%Colour macros introduced here!
%
\usepackage{amscd,amssymb,amsmath}
\usepackage{color}

\author{Michael Lipnowski, Jacob Tsimerman}
\begin{document}
\title[Cohen Lenstra for \'{E}tale Group Schemes]{Cohen Lenstra Heuristics for \'{E}tale Group Schemes and Symplectic Pairings}
\maketitle
\begin{abstract}

We generalize the Cohen-Lenstra heuristics over function fields to \'{e}tale group schemes $G$  (with the classical case of abelian groups corresponding to constant
group schemes). By using the results of Ellenberg-Venkatesh-Westerland, we make progress towards the proof of these heuristics. Moreover, by keeping track of the image of the 
Weil-pairing as an element of $\wedge^2G(1)$, we formulate more refined heuristics which nicely 
explain the deviation from the usual Cohen-Lenstra heuristics for abelian $\ell$-groups in cases where $\ell\mid 
q-1$;  the nature of this failure was suggested already in the works of Garton, EVW,  and others. On the purely large random matrix side, we provide a natural model which has the correct moments, and we
conjecture that these moments uniquely determine a limiting probability measure.

\end{abstract}

\section{Introduction}

In \cite{CL}, Cohen and Lenstra described a natural probability measure $m_{\mathrm{CL}}$ on the set of finite abelian $\ell$-groups; the Cohen-Lenstra measure of every finite abelian $\ell$-group $A$ is inversely proportional to $\# \Aut(A).$  The prediction that the distribution of $\ell$-parts of class groups of appropriate families of number fields is governed by this probability measure (suitably generalized) is known as the \emph{Cohen-Lenstra-Martinet Conjecture}.  Empirically, Cohen and Lenstra observed that $m_{\mathrm{CL}}$ correctly predicts the distribution of the $\ell$-part of class groups of quadratic fields, for $\ell$ an odd prime\footnote{If $\ell=2$, then Gerth conjectured that if one considers the subgroup of the Class group consisting of elements which are squares, the same conjecture holds. This `degenerate' case turns out to be more accessible and there are unconditional results in this direction by
Fouvry-Kulners \cite{FK}, Smith \cite{Smith},Milovic \cite{Milovic}, and Klys \cite{Klys}}. 

For functions $f$ defined on isomorphism classes of finite abelian $\ell$-groups which are absolutely integrable with respect to $m_{\mathrm{CL}},$ define 
$$\mathbf{E}_n(f) := \lim_{X \to \infty} \frac{\sum_{\deg(K / \Q) = n,|\mathrm{disc}(K)| < X} f(\mathrm{Cl}(K))}{\sum_{\deg(K / \Q) = n, |\mathrm{disc}(K)| < X} 1},$$
assuming the above limit exists.  Despite much work on the Cohen-Lenstra heuristics, the only unconditional results known to date are:
\begin{itemize}
\item
Davenport-Heilbronn's determination of the average size of the 3-torsion subgroup of $\mathrm{Cl}(K)$ for quadratic fields $K$ \cite{DH}:
$$\mathbf{E}_2( \# \mathrm{Surj}(\bullet, \ZZ / 3)) = \mathrm{Expectation}_{m_{\mathrm{CL}}}(\# \mathrm{Surj}(\bullet, \ZZ / 3))$$

\item
Bhargava's determination of the average size of the 2-torsion subgroup of $\mathrm{Cl}(K)$ for cubic fields $K$ \cite{Bhargava}: 
$$\mathbf{E}_3( \# \mathrm{Surj}(\bullet, \ZZ / 2)) = \mathrm{Expectation}_{m_{\mathrm{CL}}}(\# \mathrm{Surj}(\bullet, \ZZ / 2))$$ 
\end{itemize} 

Remarkable recent progress toward the Cohen-Lenstra Conjecture has been made for class groups of functions fields of curves over finite fields. In this case, using the methods of \'{e}tale cohomology and by proving results on homological stability, Ellenberg, Ventakesh, and Westerland have obtained unconditional results essentially proving that, for every finite abelian group $\ell$-group $A,$ the expectation $\mathbf{E}_2(\# \mathrm{Surj}(\bullet,A))$ is very close to 
$\mathrm{Expectation}_{m_{\mathrm{CL}}}(\# \mathrm{Surj}(\bullet,A))$  

To get a handle in the function field case, one looks at all the geometric $\ell$-power torsion points of the Jacobian
as a module for the Frobenius operator, of which the class group becomes one small piece.  As such, one of our main goals in this paper is to generalize these heuristics in the function
field case by remembering the entire action of the Frobenius operator. A convenient language for making this precise is that of \'{e}tale group schemes.

\subsection{\'{E}tale group schemes}

Let $C/\F_q$ be a (smooth, projective, irreducible) curve over a finite field. The class group of $C$ is naturally $J(\F_q)$ where $J$ is the Jacobian of $C$. One is then naturally
led to ask: what the distribution of $J(\F_q)[\ell^{\infty}]$ as $C$ varies in some natural family\footnote{One often takes some Hurwitz scheme for the family $C$ varies in, as an analogue 
of looking at number fields of fixed degree. It seems plausible one could look at all curves of a fixed genus, but this seems very difficult, and it is not clear what behaviour to expect.}?

The group $J(\F_q)[\ell^{\infty}]$ equals the kernel of $1-F$ acting on $J(\ol{\F_q})[\ell^{\infty}],$ where $F$ denotes the Frobenius operator.  The $\ell$-part of the class group is thus identified as the `1-eigenspace' of the Frobenius operator. However, there is no need to restrict attention to this particular eigenspace; we could consider all eigenspaces at once. More generally,
we consider monic polynomials $P(x)\in\Z_{\ell}[x]$ for which $P(0)$ is invertible and consider the kernel of $P(F).$  In fact, since the Jacobian is self-dual, all ``$\ker P(F)$ information" is contained in the cokernel of $P(F)$ on the Tate module
of the Jacobian, and the latter is the object we actually study.\footnote{Note that one issue which arises now is that the kernel could be infinite, but this should arise very infrequently, so that the distribution we obtain should be supported on
finite modules.}  This cokernel is a module with the action of a Frobenius operator. Moreover, because the Frobenius operator acts invertibly, the cokernel may be thought of as an \'{e}tale group 
scheme, which is the quotient of $J[\ell^n]$ by $P(F)$, for any large enough $n$. By using the results of \cite{EVW} we obtain information about the distribution of these group schemes.

\subsection{Symplectic pairings and the Cohen-Lenstra-Martinet heuristics}

In the case where the base number field contains roots of unity, Malle presented computational
evidence which yielded doubt on the Cohen-Lenstra-Martinet heuristics. Malle refined these heuristics, giving a different random model involving the symplectic group, and there has been much evidence that 
Malle's refinement is correct \cite{Achter} \cite{Garton}. We present a refinement Cohen-Lenstra heuristics in all cases, which we believe nicely explains these
discrepancies. 

The Weil-pairing on $J[\ell^n]$ can be thought of as a section of a certain naturally defined group scheme $\wedge^2 J[\ell^n](1)$ which we define in \S4. This pushes
forward, so we naturally get an element of $\wedge^2(J[\ell^n]/(1-F))(1)$. We thus can speak of a distribution not just on groups $A$, but on pairs $(A,\omega)$ where $\omega$ is an
element of $\wedge^2A (1)(\F_p)$. Now it turns out that if $\ell\nmid q-1$ then $\omega$ is forced to be 0, and hence we revert to the usual Cohen-Lenstra measure. However, if
$\ell\mid q-1$, then we in fact get a more refined distribution. In fact, it is natural to combine this decoration with the generalization to arbitrary \'{e}tale group schemes and this is what we carry out.

\subsection{Results}
$\newline\newline$
Our main result is as follows. See \S4 for precise definitions.

\begin{thm}[Corollary \ref{momentsare1general}]\label{main}
Let $\ell$ be an odd prime. Let $G$ be a finite \'{e}tale group scheme over $\F_q$ of order $\ell^n$,  and $\omega_G\in(\wedge^2 G)(1)(\F_q)$. 
For each $g$,  let $\avg(G,\omega_G,g,q)$ denote the average number of surjections from $\Pic^0(C)[\ell^n]$ to $G$ which push-forward the Weil-paring to $\omega_G$, where $C$
varies over hyperelliptic curves of genus $g$.

Let $\delta^{\pm}(q,\omega_G)$ be the lower and upper limits of $\avg(G,\omega_G,g,q)$ as $g\ra\infty$. Then as $q\ra\infty$ and $n$ stays fixed, $\delta^{+}(q,\omega_G)$
 and $\delta^{-}(q,\omega_G)$ converge to 1.
\end{thm}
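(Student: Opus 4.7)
The plan is to interpret $\avg(G,\omega_G,g,q)$ as a ratio of $\F_q$-point counts on moduli spaces, apply homological stability in the style of Ellenberg-Venkatesh-Westerland as $g \to \infty$, and then use Grothendieck-Lefschetz plus Deligne purity to control the $q \to \infty$ limit.

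First I would construct a Hurwitz-type space $\mathcal{H}_{G,\omega_G,g}$ over $\F_q$ whose $\F_q$-points classify pairs $(C,\phi)$ with $C$ a hyperelliptic curve of genus $g$ and $\phi : \Pic^0(C)[\ell^n] \twoheadrightarrow G$ a Galois-equivariant surjection satisfying $\phi_*(\text{Weil pairing}) = \omega_G$. Such a $\phi$ corresponds to a connected \'etale $G$-cover of $C$; pulling back along the hyperelliptic double cover $C \to \PP^1$ realises $\mathcal{H}_{G,\omega_G,g}$ as a union of connected components of a standard Hurwitz space parametrising branched covers of $\PP^1$ with structure group a suitable extension of $\Z/2$ by $G$. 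The ``push-forward equals $\omega_G$'' condition is locally constant, so cuts out a sub-union of components. The denominator of $\avg$ is then essentially $\#\Conf_{2g+2}(\F_q)$.

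Applying Grothendieck-Lefschetz to both numerator and denominator rewrites the ratio in terms of $\ell$-adic cohomology with Frobenius traces. EVW-style homological stability says $H^i_c(\mathcal{H}_{G,\omega_G,g,\ol{\F_q}}, \Q_\ell)$ stabilises as $g \to \infty$ for each fixed $i$, with the unstable range confined to a bounded slice of cohomological degrees. Deligne's purity bounds then imply that, after taking $q \to \infty$, only the top-degree term survives in the ratio, and that top-degree term equals the weighted number of geometric connected components of $\mathcal{H}_{G,\omega_G,g}$ under Frobenius.

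The main obstacle is identifying this limiting count as $1$. This is a topological and group-theoretic problem: one must count the connected components of the complex Hurwitz space decorated by the pairing $\omega_G$, and show that the Frobenius-weighted count matches the corresponding count for $\Conf_{2g+2}$. The non-degeneracy of the Weil pairing and the fact that it lands in $\wedge^2 G(1)$ suggest that the components should be indexed compatibly with a symplectic structure, so that specifying $\omega_G$ picks out exactly the single class needed to normalise the ratio to $1$. Generalising the classical connectedness results for Hurwitz spaces to incorporate symplectic pairing data is where I expect the real work to reside, and it is presumably the content of the ``momentsare1general'' result that this corollary invokes.
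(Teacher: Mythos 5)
Your overall architecture matches the paper's: realize $\avg(G,\omega_G,g,q)\cdot\#\Conf_g(\F_q)$ as a point count on a finite \'{e}tale cover of $\Conf_g$ parametrizing surjections $\Pic^0(C)[\ell^n]\twoheadrightarrow G$, cut out the locus where the pushed-forward Weil pairing equals $\omega_G$ (this locus is indeed open and closed, since the pairing map lands in the finite \'{e}tale scheme $(\wedge^2 G)(1)$), identify this cover with (components of) an EVW Hurwitz space for $B\rtimes\langle\pm1\rangle$ to import their cohomology bounds, and finish with Grothendieck--Lefschetz and Deligne. But there is a genuine gap at exactly the point you flag as "where the real work resides": you do not prove that the fiber over $\omega_G$ is a \emph{single geometrically connected} component (defined over $\F_q$), and you defer it to "the content of the momentsare1general result that this corollary invokes" --- which is circular, since the statement you are asked to prove \emph{is} that corollary. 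Without this connectedness statement, the $q\to\infty$ limit of the ratio is not $1$ but the number of Frobenius-fixed geometric components lying over $\omega_G$, and nothing in your sketch pins that number down. The paper supplies this via two concrete inputs: (i) Yu's theorem that the geometric monodromy of the hyperelliptic family on $\ell^n$-torsion is the full symplectic group $\Sp_{2g}(\Z/\ell^n)$, and (ii) a Witt-type extension theorem (cited as [Michael, Thm.\ 2.14]) asserting that for $g$ large the surjections $(\Z/\ell^n)^{2g}\twoheadrightarrow B$ carrying the standard symplectic form to a prescribed element of $\wedge^2 B$ form a single nonempty orbit under $\Sp_{2g}(\Z/\ell^n)$. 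Together these show (Proposition \ref{connectedgeometricfibers}) that geometric components of the moduli space of surjections biject with geometric points of $(\wedge^2 G)(1)$, so the fiber over the rational point $\omega_G$ contributes exactly $q^{\dim}(1+O(1/\sqrt q))$ points.

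A secondary, more technical gap: you invoke "homological stability as $g\to\infty$ for each fixed $i$," but stabilization alone does not suffice to control the Lefschetz sum over all cohomological degrees uniformly in $g$. What the paper actually uses is the EVW bound $\dim H^i \le C(\ell^n)^{i+1}$ with $C$ independent of $g$ (their Lemma 7.8), transported to the present moduli space via the explicit isomorphism with the Hurwitz space $\mathrm{Hn}^c_{B\rtimes\langle\pm1\rangle,n}$ over $\ol{\F_q}$; combined with Deligne's weight bounds this makes the error term $O(C/\sqrt q)$ uniform in $g$, which is what allows the double limit ($g\to\infty$ first, then $q\to\infty$) in the statement. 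You should make both the uniform Betti-number bound and the component-counting argument explicit for the proof to close.
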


Our proof of this theorem closely follows the strategy of \cite{EVW}. We represent the averages in question in terms of points on a moduli space we construct. These
moduli spaces turn out to be twists of the moduli spaces that appear in \cite{EVW}. We can therefore directly apply their results on cohomology bounds, and the theorem
follows from the Lefschetz trace formula once we identify the number of connected components of these moduli spaces. 

In \S2 and \S3 we develop foundational results on Cohen-Lenstra measures in the context of our decorated \'{e}tale group schemes. We obtain the strongest results
in the case where $\omega_G\in\wedge^2G(1)$ is forced to be 0 -- which happens `generically':

\begin{thm} [Theorem \ref{uniquenesswithoutsymplecticstructure}]
Let $P(x)\in\Z_{\ell}[x]$ be a monic polynomial, such that $P(q)$ is not divisible by $\ell$, and assume that $\ell$ is odd. Let $R=\Z_{\ell}[x]/P(x)$. There exists a unique probability 
measure $\mu$, supported  on finite $R$-modules, such that for any finite $R$-module $M$,  the expected number of surjections from a $\mu$-random module to $M$ is $1$.
Moreover, $\mu$ is supported on precisely the modules of projective dimension 1, and assigns such a module $M$ measure $\mu(M)=\frac{c}{\#\Aut(M)}$ where 
$c=\prod_{k_j}\prod_{i=1}^{\infty}(1-|k_j|^{-i})$ and the product is over the finite residue fields of $R$. 

\end{thm}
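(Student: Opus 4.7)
I would split the argument into existence and uniqueness. For existence, take as candidate the measure $\mu$ with $\mu(\{M\}) = c/\#\Aut_R(M)$ for $M$ of projective dimension $1$, and $\mu(\{M\}) = 0$ otherwise. Since $R$ is finite flat over $\Z_\ell$, it is semi-local, and its completions $\widehat{R}_{\mathfrak{m}_j}$ at the maximal ideals have residue fields exactly the $k_j$ appearing in the product defining $c$. A finite $R$-module decomposes canonically as a product over $j$ of $\widehat{R}_{\mathfrak{m}_j}$-modules, compatibly with $\#\Aut_R$ and projective dimension. Hence the sum $\sum_M 1/\#\Aut_R(M)$ over pd-$1$ modules factors as a product of classical Hall sums, each equal to $\prod_{i \geq 1}(1 - |k_j|^{-i})^{-1}$; multiplying by $c$ gives total mass $1$.

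Next I would verify the moment identity $\sum_M \mu(\{M\})\#\Sur_R(M,N) = 1$ for every finite $R$-module $N$. I parametrize the left-hand side by kernels $K$ and extension classes in $\Ext^1_R(N,K)$. The requirement that $M$ be pd $1$ imposes a local condition on $K$ at each $\mathfrak{m}_j$, so by the same product decomposition the identity reduces to the analogous moment computation for each complete local factor $\widehat{R}_{\mathfrak{m}_j}$. These local identities are the usual Cohen-Lenstra moment computations for finite modules over a complete local Noetherian ring with finite residue field, the only care needed being the case where $N$ itself has projective dimension greater than $1$, which one handles by a careful treatment of the $\Ext$ long exact sequence.

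For uniqueness, the key tool is a M\"obius-type inversion on the poset of isomorphism classes of finite $R$-modules. One produces coefficients $a^M_N$ so that $\mathbf{1}_{\cdot \cong M} = \sum_N a^M_N \#\Sur_R(\cdot, N)$ as a formal combination, with the series converging absolutely when evaluated on any probability measure whose surjection moments are uniformly bounded. The coefficients come from the product decomposition into complete local factors, where the corresponding inversion is classical. Applied to any probability measure $\nu$ with all moments equal to $1$, this formula forces $\nu(\{M\}) = \mu(\{M\})$ for every pd-$1$ module $M$; since $\mu$ already has total mass $1$, $\nu$ must assign zero mass to every non-pd-$1$ module, so $\nu = \mu$.

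The principal obstacle is verifying absolute convergence of the M\"obius inversion when applied to an arbitrary probability measure, which demands quantitative tail bounds on $\#\Sur_R(M,N)/\#\Aut_R(M)$ as $N$ varies. These bounds ultimately come from classical estimates at each residue field $k_j$ combined with the product structure inherited from the completions. The hypothesis that $\ell$ is odd enters to rule out parasitic symmetric bilinear-form contributions, consistent with the main themes of the paper; the hypothesis $\ell \nmid P(q)$ appears irrelevant for this abstract statement and is inherited from the larger context of the paper.
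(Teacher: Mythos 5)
Your overall architecture (split $R=\prod_j R_j$ into its complete local factors, prove existence and the moment identity factorwise, then invert the surjection-moment transform for uniqueness) runs parallel to the paper's, but the two load-bearing steps are asserted rather than proved, and one hypothesis is misread. For existence, you call the local identities $\sum_{M\ \mathrm{of\ proj.\ dim.}\ 1} 1/\#\Aut_{R_j}(M)=\prod_{i\geq 1}(1-|k_j|^{-i})^{-1}$, and the corresponding moment identities, ``classical Hall sums'' and ``the usual Cohen--Lenstra moment computations.'' They are classical only when $R_j$ is a discrete valuation ring, where every finite module has projective dimension $1$. But the local factors of $R=\Z_{\ell}[x]/P(x)$ can be non-maximal orders or non-reduced rings (e.g.\ $P(x)=(x-1)^2$), and then the restriction to projective dimension $1$ is a genuine constraint; these identities are precisely the nontrivial content the paper extracts from the random-matrix model over $R$: Theorem \ref{convmeas} (including the no-escape-of-mass step, which is what gives total mass $1$) and the integral computation of the moments in \S2, which deliberately avoids counting extensions. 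Your sketched $\Ext^1$ parametrization does not engage the actual difficulty: whether an extension of $N$ by $K$ has projective dimension $1$ depends on the extension class, not only on $K$ and $N$, so the count does not reduce to a condition on $K$; indeed the paper's own remarks on identities in \S2 note that even such identities seem hard to prove by elementary means.

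For uniqueness, your M\"obius-type inversion is essentially the paper's operator argument (Lemma \ref{unique}, following EVW), with the product ring handled by multiplying local inversions instead of the paper's induction on the local factors (the paper inducts because the naive global operator bound can fail when the global constant $c<\tfrac12$; your product variant can be made to work, but only granted absolute summability of each local inverse). That summability is exactly where ``$\ell$ odd'' enters: it forces every residue field $k_j$ to have at least $3$ elements, so $c_{R_j}>\tfrac12$ and the Neumann series for the inverse converges; the paper explicitly cannot prove uniqueness when some $k_j=\F_2$. Your closing claim that $\ell$ odd only rules out ``symmetric bilinear-form contributions'' is wrong for this statement --- there is no pairing anywhere in it --- and by declaring the inversion classical and independent of $\ell$ you are implicitly claiming the residue-field-$\F_2$ case, which is open in the paper. (Your observation that $\ell\nmid P(q)$ is not needed for the abstract statement is correct; it matters only for identifying $\mathcal{E}$ with plain $R$-modules and for the convergence of the symplectic measures $\mu_g$, which is the other half of Theorem \ref{uniquenesswithoutsymplecticstructure} in the body of the paper.)
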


As a consequence of these Theorems, we obtain in proposition \ref{approximatemomentsimpliesapproximatelyCL} similar results
on limiting measures for our decorated \'{e}tale group schemes.

As a concrete application of our methods, we prove the following result on the independence of the class group of its hyperelliptic curves  and its quadratic twist.

\begin{thm}[Proposition \ref{independence}]
Suppose $\ell \nmid q^2 - 1$ and that $\ell \neq 2.$  Fix $\epsilon > 0.$  Fix a finite set $S$ of finite abelian $\ell$-groups. For a curve $C$ over $\F_q$, denote by 
$C^{\sigma}$ the quadratic twist of $C$.

There exists $Q(S)\gg 0$ such that if $q,g \geq Q(S)$ and $A,B \in S,$

$$\left| \mathrm{Prob} \left( \mathrm{Jac}(C)(\F_q)_\ell \cong A \text{ and } \mathrm{Jac}(C^{\sigma})(\F_q)_\ell \cong B \right) - \frac{c_R}{\# \mathrm{Aut}_{\Z_{\ell}}(A)\#\mathrm{Aut}_{\Z_{\ell}}(B)} \right| < \epsilon,$$

where $C$ varies over hyperelliptic curves of genus $g$, and $c_R$ is the normalizing constant from Theorem \ref{convmeas}.  
i.e. the class groups $\mathrm{Jac}(C)(\F_q)_\ell$ and $\mathrm{Jac}(C^{\sigma})(\F_q)_\ell$ behave almost independently for $g$ sufficiently large.
\end{thm}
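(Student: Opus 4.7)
The plan is to reduce the bivariate statement to a single application of the general framework developed earlier, taking $P(x) = x^2 - 1$.

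First I would identify the quadratic twist picture: for a hyperelliptic curve $C/\F_q$, the $\F_q$-points of the twist satisfy $\mathrm{Jac}(C^{\sigma})(\F_q)[\ell^\infty] = \ker(F+1)$ on $\mathrm{Jac}(C)(\overline{\F_q})[\ell^\infty]$, while $\mathrm{Jac}(C)(\F_q)[\ell^\infty] = \ker(F-1)$. So the pair $(\mathrm{Jac}(C)(\F_q)_\ell, \mathrm{Jac}(C^{\sigma})(\F_q)_\ell)$ is precisely (the dual of) the cokernel of $P(F) = F^2 - 1$ on the $\ell$-adic Tate module. Set $R = \Z_\ell[x]/(x^2-1)$; since $\ell$ is odd, $(x-1)$ and $(x+1)$ are coprime in $\Z_\ell[x]$, so by CRT $R \cong \Z_\ell \times \Z_\ell$, and a finite $R$-module is simply a pair $(A,B)$ of finite abelian $\ell$-groups, with $\#\Aut_R(A,B) = \#\Aut_{\Z_\ell}(A) \cdot \#\Aut_{\Z_\ell}(B)$.

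Next I would verify the hypotheses for applying the uniqueness theorem. The condition $\ell \nmid q^2 - 1 = P(q)$ is exactly the hypothesis of Theorem \ref{uniquenesswithoutsymplecticstructure}, so the measure on finite $R$-modules with all surjection-moments equal to $1$ is unique and given by $\mu(M) = c_R/\#\Aut_R(M)$. Moreover, since $\ell \nmid q-1$ and $\ell \nmid q+1$, the Weil pairing on $J[\ell^n]$ forces the symplectic decoration $\omega$ to be trivial on both the $F=1$ and $F=-1$ generalized eigenspaces (the Weil pairing couples the $\lambda$- and $q/\lambda$-eigenspaces, and neither $1$ nor $-1$ is paired with itself modulo $\ell$ under this condition). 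Hence no symplectic structure enters, and we are in the ``generic'' regime where the uniqueness theorem applies.

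I would then apply Theorem \ref{main} (more precisely, the generalization to arbitrary $P$ that it encapsulates via the étale group scheme framework) to $P(x) = x^2 - 1$: as $q, g \to \infty$, the average number of surjections from the cokernel of $P(F)$ onto any fixed finite $R$-module $M$ converges to $1$. Combined with Proposition \ref{approximatemomentsimpliesapproximatelyCL}, approximate equality of the relevant moments forces approximate equality of the measures on any finite set $S$ of isomorphism classes. Applied to the finite set $\{(A,B) : A,B \in S\}$ of $R$-modules, this yields the claimed bound with $c_R$ being the normalizing constant attached to $R = \Z_\ell \times \Z_\ell$.

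The main obstacle is the passage from moment convergence to pointwise probability convergence for the $R$-module distribution: the moment data only controls expected surjection counts, and inverting this to control probabilities of specific isomorphism classes requires the uniqueness/approximation machinery developed in \S2-\S3 and the fact that $M$ ranges over finite $R$-modules of projective dimension $1$, to which any cokernel of $P(F)$ automatically belongs when $P(q)$ is a unit. Once this inversion is in place, the rest of the argument is bookkeeping identifying the product structure $\#\Aut_R(A,B) = \#\Aut_{\Z_\ell}(A) \cdot \#\Aut_{\Z_\ell}(B)$.
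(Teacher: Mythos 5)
Your proposal is correct and follows essentially the same route as the paper: take $P(x)=x^2-1$, $R=\Z_{\ell}[x]/(x^2-1)\cong\Z_{\ell}\times\Z_{\ell}$, note that $\ell\nmid P(q)$ kills the symplectic decoration, feed the moments from Corollary \ref{momentsare1general} into Proposition \ref{approximatemomentsimpliesapproximatelyCL}, and use the CRT splitting to factor $c_R/\#\Aut_R(M_{A,B})$ as the product over $A$ and $B$. The only cosmetic difference is that the paper packages the joint condition as $\mathrm{Jac}(C_x)(\F_{q^2})_\ell\cong M_{A,B}$ while you phrase it via $\coker(F^2-1)$ on the Tate module, which is the same identification up to the self-duality already built into the framework.
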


\subsection{Plan of the paper}
\begin{itemize}

\item In \S2 we present a generalization of the usual Cohen-Lenstra measure to rings which are finite over $\Z_{\ell}$. 
\item In \S3 we construct a random model for pairs $(G,\omega\in \wedge^2(G)(1))$
where $G$ is a module over a ring $\Z_{\ell}[F]/P(F)$. We conjecture that our model yields a unique measure with a `moments equal 1' property, and using our
results in \S2 we prove this uniqueness in the case where $\omega$ is 
forced to be 0 -- in other words, when we don't have to keep track of any symplectic structure, so we can revert to a linearized model. 
\item In \S4 we use the work of 
Ellenberg-Venkatesth-Westerland to prove results analogous to theirs in the direction of Cohen-Lenstra for function fields, for our refined distributions.
\item  In \S5 we present some applications, notably to
the independence of the $\ell$-part of the class group of a hyperelliptic curve and its quadratic twist. 
\end{itemize}

\subsection{Acknowledgements} 
It is a pleasure to thank Krishnaswami Alladi for explaining how to prove the Cohen-Lenstra identities using iterated Durfee's squares.

\section{Large random matrices over rings}

\subsection{Summary}
The purpose of this section is to generalize the Cohen-Lenstra measure for finite abelian $\ell$-groups to the case of finite $R$-modules for certain rings $R$, finite over $\Z_{\ell}$. This
measure has the nice property that for every finite $R$-module $M$, the expected number of $R$-module surjections to $M$ is 1. \emph{The support of this measure is not
full}, but on the support the measure of $M$ is proportional to $\frac{1}{\#\Aut M}$.

\subsection{The Cohen-Lenstra Measure for $R$-modules}

Let $R$ be a finite, local $\Z_p$-algebra, with residue field $\F_R$ such that $\Z_p\subset R$.
 Let $S_R$ be the set of all finite $R$-modules and define a measure $\mu_{R,N}$ on $S_R$ as follows: 
Let $\phi_N:\End_R(R^N)\ra S_R$ be defined by $G\ra\coker G$. Then $\mu_N$ is the pushforward of Haar measure under $\phi_N.$ 
Recall that since $R$ is a local ring all projective modules are free.
Recall also that we say that a module $M\in S_R$ has projective dimension $1$ if it has a projective (free) resolution of length $1$: $0\ra F_1\ra F_2\ra M\ra 0$.
Call $T_R$ the set of modules $M\in S_R$ which occur in the image of $\phi_N$ for some $N$. Note that if $R$ is torsion free, $T_R$ coincides with the set of
modules of projective dimension 1.

We can give a simple homological criterion for a finite module $M$ to occur in $T_R$. Define $d_M=\dim_{\F_R} \Tor_R^1(M,\F_R)-\dim_{\F_R} M\otimes_R \F_R$.

\begin{lemma}

For all finite modules $M$, $d_M\geq 0$, and a module $M$ occurs in $T_R$ iff $d_M= 0$.

\end{lemma}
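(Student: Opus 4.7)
The plan is to pass to a minimal free resolution of $M$ and exploit the hypothesis $\Z_p\subset R$ by tensoring it with $\Q_p$ over $\Z_p$.

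First, since $R$ is local Noetherian, I would fix a minimal free resolution
$$\cdots\to R^{b_2}\xrightarrow{d_2} R^{b_1}\xrightarrow{d_1} R^{b_0}\to M\to 0,$$
where $b_0=\dim_{\F_R}(M\otimes_R\F_R)$ and $b_1=\dim_{\F_R}\Tor_R^1(M,\F_R)$, so that $d_M=b_1-b_0$. Because $\Z_p\hookrightarrow R$, the ring $R$ is torsion-free over the PID $\Z_p$ and hence $\Z_p$-flat; therefore $R':=R\otimes_{\Z_p}\Q_p$ is a nonzero $\Q_p$-algebra of dimension $D=\mathrm{rank}_{\Z_p}(R)\geq 1$, and tensoring the resolution with $\Q_p$ over $\Z_p$ preserves exactness. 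Since $M$ is finite and hence $\Z_p$-torsion, $M\otimes_{\Z_p}\Q_p=0$, yielding an exact complex
$$\cdots\to (R')^{b_2}\xrightarrow{d_2'} (R')^{b_1}\xrightarrow{d_1'} (R')^{b_0}\to 0.$$
Surjectivity of $d_1'$ forces $b_1 D\geq b_0 D$, and since $D\geq 1$ this gives $d_M\geq 0$.

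Next I would establish the ``iff'' statement. As noted in the excerpt, for torsion-free $R$ the set $T_R$ consists precisely of the finite $R$-modules of projective dimension at most one: any $\phi\in\End_R(R^N)$ with finite cokernel becomes a surjection after tensoring with $\Q_p$, hence a bijection, so $\phi$ is already injective because $R\hookrightarrow R'$; conversely, a pd-$\leq 1$ resolution has free terms of equal rank by the same tensoring argument. It thus suffices to prove that $M$ has projective dimension $\leq 1$ iff $d_M=0$. The ``only if'' direction is immediate by applying the tensoring argument to the minimal resolution $0\to R^{b_1}\to R^{b_0}\to M\to 0$, which forces $b_0=b_1$. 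For ``if'', suppose $b_0=b_1$; then $d_1'$ is a surjection between $\Q_p$-spaces of equal finite dimension, hence an isomorphism, so by exactness of the tensored complex $d_2'=0$. Because $R\hookrightarrow R'$, the matrix $d_2$ itself must vanish, and exactness of the original resolution at $R^{b_1}$ gives $\ker d_1=\im d_2=0$. Thus $0\to R^{b_1}\xrightarrow{d_1} R^{b_0}\to M\to 0$ is an exact pd-$\leq 1$ resolution, so $M\in T_R$.

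The only delicate step is the implication $d_2'=0\Rightarrow d_2=0$, which uses the $\Z_p$-flatness of $R$ in an essential way; without the hypothesis $\Z_p\subset R$, both this injectivity and the inequality $D\geq 1$ needed for the lower bound on $d_M$ could fail. The rest is bookkeeping with minimal Betti numbers.
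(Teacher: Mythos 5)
The first half of your argument, giving $d_M\geq 0$, is fine; note, though, that it only uses flatness of $\Q_p$ over $\Z_p$ and the nonvanishing of $R\otimes_{\Z_p}\Q_p$, not any flatness of $R$ itself. The second half rests on a false premise: $\Z_p\subset R$ does \emph{not} make $R$ torsion-free (hence flat) over $\Z_p$. The standing hypotheses of this section allow torsion, and the paper's own later example $R=\Z_p[x]/(px,x^2)$ contains $\Z_p$ while $x$ is $p$-torsion. For that ring both links of your reduction break. First, $T_R$ is strictly larger than the set of modules of projective dimension at most $1$: the module $M=R/pR$ is the cokernel of multiplication by $p$ on $R$, so $M\in T_R$ and indeed $d_M=0$, yet $M$ has infinite projective dimension (as the paper notes, since $\ker(p\colon R\to R)=\F_p\cdot x\cong\F_R$ and $R$ is not regular). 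Second, for this same $M$ the step you yourself flag as delicate, $d_2'=0\Rightarrow d_2=0$, fails outright: in the minimal resolution of $R/pR$ the map $d_2$ is (up to units) multiplication by $x$, whose image is the $p$-torsion line $\F_p\cdot x$, nonzero but killed by inverting $p$. So your ``if'' direction proves an intermediate statement ($d_M=0\Rightarrow \mathrm{pd}\,M\leq 1$) that is false in the generality of the lemma, and the identification $T_R=\{\mathrm{pd}\leq 1\}$ is only valid under the extra torsion-freeness hypothesis that the paper deliberately does not impose here.

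The repair is to avoid projective dimension entirely, which is how the paper argues: membership in $T_R$ asks only for a presentation $R^N\to R^N\to M\to 0$, with no injectivity of the first map. Taking any surjection $R^N\to M$ with kernel $U$ and tensoring with $\F_R$, one finds $d_M=\dim_{\F_R}(U\otimes_R\F_R)-N$, so by Nakayama $d_M=0$ is precisely the condition that $U$ be generated by $N$ elements, i.e.\ that $U$ be a quotient of $R^N$; this gives both implications with no torsion-freeness. Your $\Q_p$-rank count then only enters to show $U$ always requires at least $N$ generators, which is the inequality $d_M\geq 0$ you already proved.
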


\begin{proof}

For all finite $M$, we can find a surjection $R^N\ra M$. Thus, we have an exact sequence $0\ra U\ra R^N \ra M$. Tensoring with $\F_R$ and taking the associated long exact sequence, we see that $\dim_{\F_R}\Tor_R^1(M,\F_R)-\dim_{\F_R} M\otimes_R \F_R = \dim_{\F_R}(U\otimes_R \F_R) - N$. Since $M[1/p]=0$, $U$ has $R[1/p]$-rank equal to $N$, and so a minimal
generating set for $U$ consists of at least $N$ elements, which means $\dim_{\F_R}(U\otimes_R \F_R) - N\geq 0$ by Nakayama's lemma. This shows $d_M\geq 0$. 

Now, if $M\in T_R$, then  we can find an exact sequence $0\ra K\ra R^N\ra R^N\ra M\ra 0$ for some $R$-module $K$. Thus we can take the $U$ in the above paragraph to be
$R^N/K$, and thus be generated by $N$ elements. Thus, in this case, $d_M=0$.

Conversely, if $d_M=0$, then the $U$ in the first paragraph must be generated by $N$ elements and so is a quotient of $R^N$. Thus $M\in T_R$. 

Now, if $R$ is torsion free, then as already mentioned $T_R$ coincides with the set of finite modules of projective dimension 1, which is equivalent to $\Tor_R^2(M,F)=0$. 
\end{proof}

\begin{remark}

We point out that another natural construction of $R$-modules -- at least in the case $R=\Z_p[F]/(P(F)) $ -- is as follows: One can take a random map $A\in\End(\Z_{p}^{d})$, and consider
$\coker P(A)$ as a module over $R$, with $F$ acting as $A$. In fact, this more directly mirrors what occurs in the geometric cases we consider, where $\Z_p^{2g}$ occurs
as a Tate module and $A$ as the Frobenius endomorphism. This turns out to be more difficult to study, which is why we focus on the model we have presented. However, one can realize
$\coker P(A)$ in our context as the Cokernel of $F-A$ acting on $R^d$, since $$R^d/(F-A) \cong \Z_p[F]^d/(F-A,P(F)) = \Z_p[F]^d/(F-A,P(A)) = \Z_p^d/(P(A)).$$ 

\end{remark} 
\begin{thm} \label{convmeas}

The $\mu_{R,N}$ converge (in the weak-* topology) to a probability measure $\mu_R$, supported on $T_R$, such that for $M\in S_R$ we have 
$\mu_R(M)= \frac{c_R}{|\Aut_R(M)|}$,where $c_R = \lim_{n\ra\infty} \frac{|\GL_n(\F_R)|}{|M_n(\F_R)|} =\prod_{i=1}^{\infty}(1-|\F_R|^{-i}).$

\end{thm}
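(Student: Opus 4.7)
I will compute $\mu_{R,N}(M)$ explicitly by a Friedman-Washington-style matrix count and pass to the limit. The complications over the classical $R = \Z_p$ setting are that $R$ is not a PID — forcing the $T_R$-restriction on the support — and that the final total-mass identity must be generalised.

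\textbf{Step 1: Counting and the formula.} Fix $M \in T_R$ with $N$ large enough that $M$ admits a free resolution $0 \to R^N \to R^N \to M \to 0$, and set $X_M = \{\phi \in M_N(R) : \coker \phi \cong M\}$. I would parametrise via the auxiliary set $\widetilde X_M = \{(\phi,\alpha) : \phi \in X_M,\; \alpha : \coker\phi \xrightarrow{\sim} M\}$, of total mass $|\Aut_R(M)| \cdot \mu_{\rm Haar}(X_M)$. The map $(\phi,\alpha) \mapsto \alpha \circ \pi_\phi$ projects $\widetilde X_M$ onto $\Sur_R(R^N, M)$ with fiber over $s$ equal to $Y_s := \{\phi : \im \phi = \ker s\}$. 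The key input is that $\ker s \cong R^N$ as $R$-modules: by Schanuel's lemma applied to the two resolutions, $\ker s$ is stably free, and finitely generated projectives over the local ring $R$ are free. So $\phi \in Y_s$ is equivalent to a surjection $R^N \twoheadrightarrow \ker s$. The subset of $M_N(R)$ consisting of maps landing in $\ker s$ has Haar ratio $|M|^{-N}$ (each column has probability $|M|^{-1}$ of lying in the index-$|M|$ submodule), and the surjective such maps correspond to $\GL_N(R) \subset M_N(R)$ of Haar ratio $\prod_{i=1}^N(1-|\F_R|^{-i})$. Summing over $s$ and dividing by $|\Aut_R M|$ yields
$$\mu_{R,N}(M) \;=\; \frac{\#\Sur_R(R^N, M)}{|\Aut_R(M)| \cdot |M|^N} \; \prod_{i=1}^N \bigl(1-|\F_R|^{-i}\bigr).$$

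\textbf{Step 2: The limit.} As $N \to \infty$, the Euler product converges to $c_R$. The ratio $\#\Sur(R^N,M)/|M|^N$ is the probability that $N$ iid uniform elements of $M$ generate $M$, which by Nakayama reduces to generation in the $d$-dimensional $\F_R$-vector space $M \otimes_R \F_R$ (with $d$ the minimal number of generators) and equals $\prod_{j=0}^{d-1}(1-|\F_R|^{-(N-j)}) \to 1$. For $M \notin T_R$, $\mu_{R,N}(M) = 0$ for all $N$ by the very definition of $T_R$. So pointwise $\mu_{R,N}(M) \to c_R/|\Aut_R(M)|$ on $T_R$ and to $0$ elsewhere.

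\textbf{Step 3: Total mass and the main obstacle.} To upgrade pointwise convergence to weak-$*$ convergence to a probability measure, I must establish the Cohen-Lenstra-type identity
$$\sum_{M \in T_R} \frac{1}{|\Aut_R(M)|} \;=\; \prod_{i=1}^\infty \bigl(1-|\F_R|^{-i}\bigr)^{-1},$$
so that no mass escapes at infinity. For $R$ a DVR with residue field $\F_R$, $T_R$ consists of all finite $R$-modules and this reduces to Hall's classical partition identity over $\F_R$. For general local $R$ I expect this to be the main technical difficulty; my plan is to pursue the iterated Durfee-square decomposition credited in the paper's acknowledgements, stratifying $T_R$ by the $\F_R$-dimension $d$ of $M \otimes \F_R$ and recursing on a "smaller" submodule of projective dimension $\leq 1$. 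An alternative I would try in parallel is a moments-based argument: a direct Haar computation gives $\mathbf{E}_{\mu_{R,N}}[\#\Sur(\bullet, M)] = \#\Sur(R^N,M)/|M|^N \to 1$ for every finite $R$-module $M$, so any weak-$*$ limit has all surjection-moments equal to $1$, and one then appeals to a uniqueness-from-moments statement on the countable set of finite $R$-modules (the ring-theoretic analogue of the classical Cohen-Lenstra uniqueness) to force the total mass to be $1$.
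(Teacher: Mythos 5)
Your Step 3 is where the argument actually fails, and it is precisely the step the paper handles differently. You treat the identity $\sum_{M\in T_R} 1/|\Aut_R(M)| = c_R^{-1}$ as a required input and leave it unproved: the iterated Durfee-square plan is only sketched, and while such combinatorics is available in the classical DVR case, the theorem is stated for arbitrary finite local $\Z_p$-algebras $R$ (including rings with torsion), where the paper explicitly says it does not know how to prove identities of this kind by elementary means -- in the paper the identity is a \emph{corollary} of the theorem, not an ingredient. Your moments-based alternative is circular as stated: the functions $\#\Sur(\bullet,M_0)$ are unbounded, so pointwise (weak-*) convergence of $\mu_{R,N}$ does not give convergence of moments, and for $M_0$ the trivial module the ``moment'' of the limit \emph{is} its total mass -- asserting that the limit has all moments equal to $1$ presupposes exactly the no-escape-of-mass statement you need; moreover the uniqueness-from-moments lemma requires $\F_R\neq\F_2$, an assumption the theorem does not make. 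The missing observation is that your own Step 1 formula already gives the uniform bound $\mu_{R,N}(M)\le 1/|\Aut_R(M)|$ for all $N$ (since $\#\Sur(R^N,M)\le |M|^N$ and the Euler factor is at most $1$); Fatou applied to the pointwise limit gives $\sum_{M\in T_R} c_R/|\Aut_R(M)|\le 1$, so $1/|\Aut_R(M)|$ is summable on $T_R$, and dominated convergence then forces the limit to have total mass $1$. This is the paper's route, and it makes the Cohen--Lenstra-type identity an output rather than an obstacle.

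There is also a gap in Step 1 relative to the stated generality. You assume every $M\in T_R$ has a free resolution $0\ra R^N\ra R^N\ra M\ra 0$ and deduce $\ker s\cong R^N$ via Schanuel; but $M\in T_R$ only provides right-exactness $R^N\ra R^N\ra M\ra 0$, and when $R$ has $\Z_p$-torsion, $T_R$ contains modules of infinite projective dimension (the paper's example $R=\Z_p[x]/(px,x^2)$, $M=R/pR$), for which $\ker s$ is not free and there is no length-one free resolution to compare with. The count itself survives: all that is needed is $\dim_{\F_R}(\ker s\otimes_R\F_R)=N$, which is exactly the condition $d_M=0$ characterizing $T_R$ in the lemma preceding the theorem, and then by Nakayama the proportion of surjections in $\Hom(R^N,\ker s)$ is $\prod_{i=1}^N(1-|\F_R|^{-i})$ whether or not $\ker s$ is free -- this is how the paper argues. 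With that repair your Steps 1--2 are a clean, exact version of the paper's double count; but until the mass issue above is resolved by the domination argument (or an honest proof of the identity), the theorem is not proved.
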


\begin{proof} Let $M$ be an $R$-module. If $M$ is not in $T_R$ then by definition $M$ never occur as the cokernel of an endomorphism $G$ and thus
cannot be in the support of $\mu_{R,N}$ for any $N$. So wlog $M\in T_R$. 

Now let us compute $\mu_{R,N}(M)|\Aut_R(M)|$. Consider the space $M_N(R)\times M^N$, with a choice of Haar measure giving total measure $|M|^N$. We can identify $M^N$ with $\Hom(R^N,M)$. Now consider the subset $X$  consisting of $(G,\phi)$ so that $\im(G)=\ker\phi$. The set of all such $G$ such that $\coker G\sim M$ has measure $\mu_N(R)$, and for each such $G$ there are $\Aut_R(M)$ choices of $\phi$ certifying the isomorphism. Thus, the measure of $X$ is $\mu_{R,N}(M)|\Aut_R(M)|$.

We now compute the measure of $X$ in a different way, by fibering over $\phi$ instead.  Now, since $M\in T_R$ there is an exact sequence
$$R^a\xrightarrow{g} R^a\xrightarrow{f} M\ra 0.$$ Let $C_f$ be the kernel of $f$.  Now, the number of maps from $R^N$ to $M$ is $|M|^N$, and
with probability tending to $1$ as  $N\ra\infty$, a random such map  $\phi$ is a surjection. Moreover, with probability tending to $1$ some subset of size $a$ of the co-ordinates 
induces the map $f:R^a\ra M$. Whenever this happens, we may make a unipotent change of co-ordinates so that the other $N-a$ co-ordinates all map to $0$,
and thus the kernel is isomorphic to $C_f\oplus R^{N-a}$. Now the measure of all $G$ whose image is contained in $\ker\phi$ is $|M|^{-N}$. Of those, we need to find the 
measure of those maps that give a surjection. We thus need to compute 
$\frac{\mu_{haar}(\textrm{Surj}(R^N,C_f\oplus R^{N-a})}{\mu_{haar}(\Hom(R^N,C_f\oplus R^{N-a})}$. By Nakayama's lemma, it is sufficient to tensor everything with
the fraction field $\F_R$ of $R$, so we are reduced to showing that $C_f\otimes_R\F_R \sim \F_R^a$. Since $C_f$ is a quotient of $R^a$ it can be 
generated by at most $a$ elements. Moreover, as can be seen by tensoring with $\Q_p$, there can be no fewer than $a$ elements in a generating set for 
$C_f$. By Nakayama's lemma again, we see that $C_f\otimes_R \F_R \sim \F_R^a$ as desired.

All that remains is to show that the $\mu_{R}$ is really a probability measure (i.e there is no escape of mass). Note that the above argument shows that
$\mu_{N,R}(M)\leq \frac{1}{\Aut_R(M)}$. Since $\mu_{R}$ has $L^1$-norm at most 1, for any $\epsilon>0$ we can pick a co-finite set $S\subset T_R
$ so that $\sum_{M\in S}\frac{1}{|\Aut_R(M)|}<\epsilon/2$ and large enough $N$ so that $\sum_{M\not\in S}|\mu_{N,R}(M)-\mu_R(M)|<\epsilon/2$, from which
it follows that $\mu_R$ has $L^1$-norm at least $1-\epsilon$. The result follows.

\end{proof}

We can also compute the moments of the measure above. As expected by analogy to classical Cohen-Lenstra, they are all equal to 1.

\begin{prop}

For any finite module $M_0$, $$\sum_{M\in S_R} \#\Sur(M,M_0)\mu_R(M)=1.$$

\end{prop}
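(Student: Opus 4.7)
The plan is to compute the relevant integral with respect to $\mu_{R,N}$ explicitly for each $N$ via Fubini, and then pass to the limit $N \to \infty$.

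Since $\mu_{R,N}$ is the pushforward of Haar measure under $G \mapsto \coker G$, and since surjections $\coker G \twoheadrightarrow M_0$ correspond bijectively to surjections $\psi: R^N \twoheadrightarrow M_0$ satisfying $\psi \circ G = 0$,
\begin{align*}
\int \#\Sur(M,M_0) \, d\mu_{R,N}(M)
&= \int_{G} \#\{\psi \in \Sur(R^N,M_0) : \psi G = 0\} \, d\mu_{haar}(G) \\
&= \sum_{\psi \in \Sur(R^N,M_0)} \mu_{haar}(\{G : \psi G = 0\}).
\end{align*}
For each fixed surjection $\psi$, the condition $\psi G = 0$ amounts to each column $G(e_i)$ lying in $\ker\psi$, an index-$|M_0|$ subgroup of $R^N$; as the columns are independent under Haar measure, this event has probability $|M_0|^{-N}$. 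Hence the integral equals $|\Sur(R^N,M_0)|/|M_0|^N$, which tends to $1$ as $N \to \infty$: by Nakayama's lemma, a map $R^N \to M_0$ is surjective iff its reduction $\F_R^N \to M_0 \otimes_R \F_R$ is surjective, and for fixed $M_0$ the density of surjective $\F_R$-linear maps onto the finite-dimensional space $M_0 \otimes_R \F_R$ tends to $1$.

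It remains to pass to the limit $\int \#\Sur(M,M_0) \, d\mu_{R,N}(M) \to \int \#\Sur(M,M_0) \, d\mu_R(M)$; this is the one real technical step. The bound $\mu_{R,N}(M) \leq 1/|\Aut_R(M)|$ established in the proof of Theorem \ref{convmeas}, together with Fatou's lemma applied to the $N$-uniform identity above, yields $\sum_M \#\Sur(M,M_0)/|\Aut_R(M)| \leq 1/c_R < \infty$. This furnishes the dominating function needed to apply dominated convergence against the pointwise convergence $\mu_{R,N}(M) \to \mu_R(M)$, completing the proof.
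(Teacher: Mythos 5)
Your proposal is correct and follows essentially the same route as the paper: both compute the level-$N$ moment via Fubini over Haar measure on $\End_R(R^N)$ to get $\#\Sur(R^N,M_0)\cdot|M_0|^{-N}\to 1$, and then justify the interchange of limit and sum using the uniform bound $\mu_{R,N}(M)\leq 1/\#\Aut_R(M)=c_R^{-1}\mu_R(M)$ on the support. Your Fatou-plus-dominated-convergence justification of that last interchange simply spells out the step the paper asserts tersely ("the sum converges absolutely, and the result follows").
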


It is worth remarking that we do not insist in the above proposition that $M_0\in T_R$.

\begin{proof}

Fix an $N<0$. Then letting $\mu_{haar}$ be the Haar measure on $\End_R(R^N)$ giving total measure $1$, we see that
\begin{align*}
\sum_{M\in S_R} \#\Sur(M,M_0)\mu_{R,N}(M) &= \int_{\phi\in\End_R(R^N)} \#\Sur(\coker\phi, M_0) d\mu_{haar}\\
&= \sum_{\psi\in \Sur(R^N,M_0)}\mu_{haar}(\phi\mid \coker\phi \in \ker\psi)\\
&= \#\Sur(R^N,M_0)\cdot |M_0|^{-N}\\
\end{align*}

Now, as $N\ra\infty$, $\#Sur(R^N,M_0)\sim |M_0|^N$. Thus, $$\lim_{N\ra\infty}\sum_{M\in S_R} \#\Sur(M,M_0)\mu_{R,N}(M)=1.$$
By the proof of the theorem above, $\mu_{R,N}\leq c_R^{-1}\mu_R$, so the sum converges absolutely, and the result follows.

\end{proof}

We expect that the moments actually determine our measure $\mu_R$. We expect this to be true in all cases, though we cannot show it in the case that 
$\F_R=\F_2$ though we expect it to be true in this case also. The proof is identical to \cite[Lemma 7.2]{EVW}, but we write it anyways.

\begin{lemma}\label{unique}

Assume that $\F_R\neq \F_2$. If $\mu$ is any measure  on $S_R$  such that the expected number of surjections from a $\mu$-random module to $M_0$ is $1$ for any 
finite $M_0$, then $\mu=\mu_R$. The same conclusion holds for $\mu$ being any function in $L^1(S_R)$.

\end{lemma}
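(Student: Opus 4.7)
The plan is to carry out Möbius inversion in the poset of finite $R$-modules ordered by surjection. Set $\nu := \mu - \mu_R$; by the moment identity already proved for $\mu_R$, the function $\nu \in L^1(S_R)$ satisfies $\sum_{M \in S_R} \#\Sur(M,M_0)\, \nu(M) = 0$ for every finite $R$-module $M_0$. It suffices to show that $\nu(M_1) = 0$ for each $M_1 \in S_R$.

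The key combinatorial input is an inversion formula, valid for each fixed $M_1$, of the form
$$\mathbf{1}[M \cong M_1] = \sum_{M_0 \in S_R} a(M_1,M_0) \cdot \#\Sur(M,M_0),$$
where the coefficients $a(M_1,M_0)$ carry alternating signs weighted by $\#\Aut_R(M_0)^{-1}$ together with an explicit $q$-factor in $|\F_R|$, and where the right-hand side converges absolutely. This is the $R$-module analogue of the Cohen-Lenstra identity obtained via iterated Durfee squares --- the classical version of which is credited to Alladi in the acknowledgments. One derives it in two stages: first for $M_1 = 0$ by inclusion--exclusion over minimal generating sets of the image of a surjection $M \twoheadrightarrow M_0$, and then for general $M_1$ by replacing "minimal generating set" with "minimal lift of a surjection onto $M_1$."

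Granting the inversion formula, Fubini closes the argument:
$$\nu(M_1) \;=\; \sum_{M} \nu(M)\, \mathbf{1}[M \cong M_1] \;=\; \sum_{M_0} a(M_1,M_0) \Bigl(\sum_M \nu(M)\, \#\Sur(M,M_0)\Bigr) \;=\; 0,$$
each inner sum vanishing by hypothesis.

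The main obstacle is justifying the interchange of summation, which requires absolute convergence of the double sum $\sum_{M,M_0} |\nu(M)\, a(M_1,M_0)\, \#\Sur(M,M_0)|$. This is precisely where the assumption $\F_R \neq \F_2$ intervenes: the coefficients $a(M_1,M_0)$ decay like a positive power of $|\F_R|^{-1}$ in $|M_0|$, while $\#\Sur(M,M_0) \leq \#\Hom(M,M_0)$ grows only polynomially in $|M_0|$ for $M$ of bounded generator count. This geometric decay beats the polynomial growth comfortably when $|\F_R| \geq 3$, but the margin closes when $|\F_R| = 2$ --- consistent with the authors' remark that they expect, but cannot prove, the conclusion to hold in that case. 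The a priori control $|\nu(M)| = O(1/\#\Aut_R(M))$ that feeds this bound is inherited from the estimate $\mu_{R,N}(M) \leq 1/\#\Aut_R(M)$ already extracted during the proof of Theorem \ref{convmeas}, together with the $L^1$ hypothesis on $\mu$.
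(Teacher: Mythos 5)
Your overall strategy (invert the surjection-count transform and apply Fubini) is a legitimate alternative route in principle, but as written it has a genuine gap exactly at the step you label the main obstacle. The formal inversion itself is not the issue: the matrix $\#\Sur(M,M_0)$ is triangular for the quotient order with diagonal $\#\Aut(M_0)$, so coefficients $a(M_1,M_0)$ supported on $M_0\twoheadrightarrow M_1$ exist and the identity $\mathbf{1}[M\cong M_1]=\sum_{M_0}a(M_1,M_0)\#\Sur(M,M_0)$ is a finite sum for each fixed $M$. What you never establish is the quantitative control of $a(M_1,M_0)$ coming out of the recursion $a(M_1,M)=-\frac{1}{\#\Aut(M)}\sum_{M_1\le M_0<M}\#\Sur(M,M_0)\,a(M_1,M_0)$, and this is precisely the hard content of ``moments determine the measure'' theorems (compare Wood's arguments, cited in the paper); ``inclusion--exclusion over minimal generating sets / minimal lifts'' is a plan, not a proof. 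Moreover, the shape of the bound you assert cannot close the Fubini argument as stated: decay like a fixed positive power of $|M_0|^{-1}$ against $\#\Sur(M,M_0)\le |M_0|^{r(M)}$ ($r(M)=$ minimal number of generators) loses as soon as $r(M)$ exceeds that fixed power, and $r(M)$ is unbounded on $S_R$, so convergence must come from a joint analysis with the weight $|\nu(M)|$. That weight is also mis-sourced: the bound $|\mu(M)|\le \#\Aut(M)^{-1}$ has nothing to do with the estimate $\mu_{R,N}(M)\le \#\Aut_R(M)^{-1}$ for the particular measures $\mu_{R,N}$; for a nonnegative $\mu$ it follows from the moment condition at $M_0=M$, and for the signed $L^1$ case in the second sentence of the lemma it is not available without a separate argument.

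Your explanation of where $\F_R\neq\F_2$ enters is also not grounded in your own argument and does not match where the hypothesis actually bites. The paper's proof is different: it forms the operator $U$ on $L^\infty(S_R)$ with $U_{M,M'}=\#\Sur(M,M')/\#\Aut(M)$, observes that $U-1$ has nonnegative entries with sums $c_R^{-1}-1$, and uses the pentagonal-number-theorem estimate $c_R>\tfrac12$ --- valid precisely because $|\F_R|\ge 3$ --- to invert $U$ by a Neumann series, so that $V=U^{-1}\mathbf{1}_{S_R}$ with $V_M=\mu(M)\#\Aut(M)$ is unique; no explicit inversion coefficients and no interchange of infinite sums are needed. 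By contrast, in the one case where your inversion is easy and explicit, namely $M_1=0$, one has $a(0,\F_R^j)=(-1)^j|\F_R|^{\binom{j}{2}}/\#\Aut(\F_R^j)$ (since $\#\Sur(M,\F_R^j)$ depends only on $r(M)$, this reduces to a standard $q$-binomial identity), and everything converges for every residue field, including $\F_2$. So the ``margin closes at $|\F_R|=2$'' story appears reverse-engineered from the statement of the lemma rather than derived from your bounds --- a sign that the sketch is not tracking the real difficulty. To make your route work you would need to prove genuine decay estimates for $a(M_1,M_0)$ (in which case you would likely remove the $\F_2$ restriction altogether, which the authors say they expect but cannot do), or else fall back on the operator-norm argument the paper actually uses.
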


\begin{proof}

Consider the infinite dimensional operator $U$ on $L^{\infty}(S_R)$ such that $U_{M,M'} =\#\Sur(M,M')/\#\Aut(M)$. Now, the rows of $U$ have sums  $c_R^{-1}$
and so $U$ is indeed an operator on $L^{\infty}(S_R)$. Moreover, the elements of $U-1$ are positive and have row sums 
$c_R^{-1}-1$. Now, to estimate $c_R$, let $q=|\F_R|^{-1}$ and note that by the Euler identity we have 
$$c_R=\sum_{n\in\Z}(-1)^nq^{(3n^2-n)/2} > 1-\frac{q}{1-q} = \frac{1-2q}{1-q}. $$ Since $q\leq \frac13$ by assumption, we conclude that $c_R>\frac{1}{2}$. Thus, the norm of $U-1$ is 
less than 1 and so $U$ is invertible  with inverse $\sum_{j=0}^{\infty} (1-U)^j$. 

Now, consider the $L^1$-function $\mu$. Since the moments to any $M_0\in S_R$ is $1$, we must have that $\mu(M_0)\leq \#\Aut(M)^{-1}$.  
Thus the vector $V$ with $V_M=\mu(M)\Aut(M)$ is in $L^{\infty}(S_R)$. Now the condition on the moments of $\mu$ amounts to saying that $UV=1_{S_R}$. 
Thus, we must have that $V=U^{-1}1_{S_R}$, and so there is a unique such function $\mu$, which must then be $\mu_R$. 

\end{proof}

\subsection{Remarks on identities}

We give an example of an $R$ with torsion where $T_R$ is larger than the set of modules with projective dimension 1. Consider $R=\Z_p[x]/(px,x^2)$.
Take $M=R/pR$. Clearly $M$ occurs in $T_R$. On the other hand, if $M$ had projective dimension 1 then $pR$ would be forced to be projective, and thus
free since $R$ is local. However, $pR$ is annihilated by $x$, and thus cannot be free.
In fact, $M$ fits into the exact sequence $0\ra\F_p\ra R\ra R \ra M$. Since $R$ is not a regular local ring, $\F_p$ has infinite projective dimension, and thus $M$ does
as well. 

For such rings $R$, if we instead considered the measure arising from the cokernel of a map $R^{N+d}\ra R^N$ we could 
conceivably get more and more modules $M$ in the support, giving a range of identities.  They would be more and more complicated, however:
For a module $M$, let $d_M=\dim_{\F_R} \Tor_R^1(M,\F_R)-\dim_{\F_R} M\otimes_R \F_R$. Then we get (by a minor modification of the proof above) the following identities:
$$\sum_{d_M\leq d} \frac{\prod_{j=1}^{d-d_M}(1-|\F_R|^{-i})^{-1}}{|M|^d|\#\Aut_R(M)|} = c_R^{-1}.$$

In fact, we can derive a series of \emph{finite} identities from the above. Consider again $R=\Z_p[x]/(px,x^2)$. Then $R$ maps to $\Z_p$, and it is easy to see from the construction that
$\mu_R$ pushes forward to $\mu_{\Z_p}$. Moreover, and $R$-module $M$ maps to $M/xM$, and it is easy to show by row and column operations and the fact that $x$ is nilpotent that 
$M$ is bounded in terms of $d_M$ and $M/xM$. Thus, we conclude that for each $p$-group $A$, we have

$$\sum_{d_M=0,M/xM\sim A} \frac{1}{\#\Aut_R(M)} = \frac{1}{\#\Aut_{\Z_p} (A)}.$$

Of course, one can generalize this to arbitrary local maps $R\ra S$ (though perhaps one has to be a bit careful if one wants the sum to remain finite). 
It is not clear to us, even for the above identity, how to prove it by elementary means.

%\begin{remark}
%Note that the above gives us the non-trivial identity $$\sum_{M\in T_R} \frac{1}{|\Aut_R(M)|} = c_R^{-1}.$$ 
%
%As a simple case, consider the ring $R=\Z_p[x]/(x^2)$. Then the category of finite $R$-modules is naturally isomorphic to the category of pairs of pairs $(A,\phi)$ 
%where $A$ is a finite $p$-group, and $\phi:A\ra A$ such that $\phi^2=0$. Since $R$ is torsion free, $M\in T_R$ if and only if $M$ has projective dimension 1, which occurs
%if and only if $\Tor^2(M,\F_R)=0$. By computing an explicit resolution of $\F_R$, one can show that this occurs if and only if the corresponding pair $(A,\phi)$ satisfies $\ker\phi=\im\phi$.
%
%Thus, we see that $$\sum_{(A,\phi)\mid \ker\phi=\im\phi} \frac{1}{|\Aut(A,\phi)|^{-1}} = c_R^{-1}.$$ Rephrasing slightly, $A$ is an element of $\Ext^1(B,B)$ where $\ker\phi$. So we see that 
%$$\sum_{B}\sum_{A\in\Ext^1(B,B)}\frac{1}{\#\Aut(A)} = c_R^{-1}.$$
% 
%So it is possible to get rather classical-looking statements from this setup. The case of classical Cohen-Lenstra can be proven by looking at iterated Durfee's squares, and it 
%would be interesting  to see if the above identity can be proven using these methods as well.
%
%\end{remark}

\section{A random model for \'{E}tale group schemes with a Symplectic form}
%Let $F$ be randomly sampled in $\GSP_{2g}^{(q)}(\mathbb{Z}_\ell).$  Consider 
%$$( \coker(1-F), e_1 \wedge f_1 + \cdots e_g \wedge e_g \mod 1 - F),$$
%where $\{ e_i, f_i \}$ is a symplectic basis for $\mathbb{Z}_\ell^{2g}.$  Pushforward of Haar probability measure gives a natural measure on 
%$$\mathcal{C} = \{ (G, \omega \in \wedge^2 G) \},$$
%where $G$ is a finite abelian $\ell$-group.
%
%\textbf{Fact}: $\Sp_{2g}(\mathbb{Z}_\ell)$ acts on surjections $f: (\mathbb{Z}_\ell^{2g}, \omega) \twoheadrightarrow (G,\omega_G)$ with exactly one orbit \cite[Theorem 2.14]{Michael}.  
%
%\begin{thm}
%The moments of this measure equal 1 for every $(G, \omega_G).$
%\end{thm}
%
%\begin{proof}
%Restatement of single orbit fact.  Fleshed out in a more general situation below.  
%\end{proof}

\subsection{\'{E}tale group schemes and symplectic forms}

Let $q$ be a prime power and $\ell$ a prime not dividing $q$. Let $P(x)\in\Z_{\ell}[x]$ be a monic polynomial satisfying $\ell\nmid P(0)$. Consider the collection $\mathcal{E}$ of 
(isomorphism classes of) triples $(G,F_G,\omega_G)$ where $G$ is a finite 
abelian $\ell$-group, $\omega_G \in \wedge^2 G,$ and $F_G$ is an endomorphism of $G$ for which $P(F_G)=0, F_G(\omega_G) = q \omega_G.$  Note that since $P(F_G)=0$ it follows 
that $F_G$ is an automorphism.

$(G,F_G)$ functorially corresponds to an \'{E}tale group scheme $\mathcal{G}$  over $\F_q$ whose $\ol{\F_q}$ points are isomorphic to $G$ with the Frobenius action 
corresponding to $F_G$ . We shall construct in \S4 a natural group scheme $\wedge^2 \mathcal{G}$ and its Tate twist 
$\wedge^2\mathcal{G}(1)=\wedge^2\mathcal{G}\otimes \mu_{\ell^{\infty}}^{-1}$, whose $
\ol{\F_q}$
points naturally correspond to $\wedge^2 G$  (once one picks a section of $\mu_{\ell^{\infty}}(\ol{\F_q})$ ), and the Frobenius action is given by $q^{-1}F_G$. Thus, the set $\mathcal{E}$ 
naturally
corresponds to pairs $\left(\mathcal{G}, \omega\in(\wedge^2\mathcal{G}\otimes \mu_{\ell^{\infty}}^{-1})(\F_q)\right)$ where $\mathcal{G}$ is a finite \'{E}tale $\ell$-group scheme over $\F_q$. This is
our motivation for studying $\mathcal{E}$, as we are interested in constructing probability distributions on \'{E}tale group schemes , together with a section of $\wedge^2\otimes\mu_{\ell^{\infty}}^{-1}$. 

\subsection{Defining a measure}

We define $\GSP_{2g}^{(q)}$ to be the coset of $\Sp_{2g}$ in $\GSP_{2g}$ whose elements all scale the symplectic form by $q$.
For each positive integer $g$, there is a map
\begin{align*}
\GSP^{(q)}(\ZZ_\ell^{2g}, \omega) &\rightarrow \mathcal{E} \\
F &\mapsto\left(\coker(P(F)), F \mod P(F), \omega \mod P(F)\right)
\end{align*}
which gives rise to a probability measure $\mu_g$ on $\mathcal{E}$ by pushing forward the Haar probability measure on $\GSP_{2g}(\mathbb{Z}_\ell).$  

\begin{thm} \label{momentswithsymplecticstructure}
Fix $(H,F_H, \omega_H) \in \mathcal{E}$.  The $\mu_g$-expected number of equivariant surjections $T: (G,F_G) \twoheadrightarrow (H,F_H)$ for which $T(\omega_G) = \omega_H$ is equal to 0 for $g\leq g(H)$, and is equal to 1 for all $g>g(H,\omega_H)$ where $g(H,\omega_H)$ depends only on $H$.
\end{thm}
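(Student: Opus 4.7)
The plan is to interchange expectation and counting, then exploit the $\Sp_{2g}(\Z_\ell)$-invariance of Haar measure on $\GSP_{2g}^{(q)}(\Z_\ell)$ to reduce everything to two geometric assertions: transitivity of $\Sp_{2g}(\Z_\ell)$ on a certain set of surjections, and existence of at least one symplectic-with-multiplier-$q$ lift of $F_H$. An equivariant surjection $T : (\coker P(F), F \bmod P(F)) \twoheadrightarrow (H,F_H)$ with $T_*\omega_G = \omega_H$ is equivalent to a surjection $\psi : \Z_\ell^{2g} \twoheadrightarrow H$ satisfying $\psi F = F_H \psi$ and $\psi_*\omega = \omega_H$, the factorization $\psi P(F)=0$ being automatic from $P(F_H)=0$. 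Writing $S_H := \{\psi : \Z_\ell^{2g} \twoheadrightarrow H \text{ surjective},\ \psi_*\omega = \omega_H\}$, Fubini gives
$$\text{(expected count)} \;=\; \sum_{\psi \in S_H} \mathrm{Prob}\bigl(F \in \GSP_{2g}^{(q)}(\Z_\ell) : \psi F = F_H \psi\bigr).$$
The vanishing is then trivial: for $g$ below $\lceil \mu(H)/2 \rceil$, with $\mu(H)$ the minimal number of $\Z_\ell$-generators of $H$, $S_H$ is empty, so the expected count is $0$.

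For each $\psi \in S_H$ the fiber $Y_\psi := \{F : \psi F = F_H \psi\}$ is either empty or a right torsor for $\mathrm{Stab}_\Sp(\psi) := \{A \in \Sp_{2g}(\Z_\ell) : \psi A = \psi\}$, so
$$\mathrm{Prob}(F \in Y_\psi) \;=\; \frac{|\mathrm{Stab}_\Sp(\psi)|}{|\Sp_{2g}(\Z_\ell)|}\cdot \mathbf{1}_{Y_\psi \neq \emptyset}.$$
I would next show that, for $g$ sufficiently large in terms of $H$ alone, $\Sp_{2g}(\Z_\ell)$ acts transitively on $S_H$ via a Witt-type extension theorem: given $\psi_1, \psi_2 \in S_H$, one builds a common symplectic basis adapted to generators of $H$ in both kernels to produce a symplectic automorphism carrying $\ker \psi_1$ to $\ker \psi_2$ and intertwining the quotient maps. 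Granted this, all $Y_\psi$ are simultaneously empty or nonempty and orbit-stabilizer forces $|S_H|\cdot |\mathrm{Stab}_\Sp(\psi)| = |\Sp_{2g}(\Z_\ell)|$, so the sum collapses to $\mathbf{1}_{Y_\psi \neq \emptyset}$, and it suffices to exhibit one pair $(\psi_0, F_0)$ with $F_0 \in Y_{\psi_0}$.

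To produce such a pair, I would decompose $\Z_\ell^{2g} = V_1 \oplus V_2$ as an orthogonal symplectic sum with $\dim V_1 = 2r$ for $r$ large enough that we can write $\omega_H = \sum_{i=1}^r h_i \wedge h_i'$ with $\{h_i, h_i'\}$ generating $H$; send a chosen symplectic basis of $V_1$ to these $h_i, h_i'$ (and $V_2 \mapsto 0$) to obtain $\psi_0 \in S_H$, then assemble $F_0 = F_0^{(1)} \oplus F_0^{(2)}$ with $F_0^{(1)} \in \GSP^{(q)}(V_1)$ lifting $F_H$ through $\psi_0|_{V_1}$ and $F_0^{(2)} \in \GSP^{(q)}(V_2)$ arbitrary. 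The eigenvalue compatibility $F_H \omega_H = q \omega_H$ built into the definition of $\mathcal{E}$ is exactly what allows $F_0^{(1)}$ to be chosen with symplectic multiplier $q$; without it the lift would be obstructed. The two main obstacles I expect are the integral Witt-type transitivity (working over $\Z_\ell$ rather than a field forces one to track generators of $H$ carefully through the kernels) and this constrained symplectic lift $F_0^{(1)}$, which is where the hypothesis $F_H\omega_H = q \omega_H$ is crucially used.
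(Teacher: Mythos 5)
Your overall skeleton is the same as the paper's: trade the expectation for a count over pairs $(T,F)$, use conjugation-invariance of Haar measure to see that all fibers $Y_\psi$ have the same measure, identify each nonempty fiber as a torsor under $\mathrm{Stab}_{\Sp}(\psi)$, and invoke a Witt-type extension theorem for the statement that $\Sp$ acts transitively on surjections with prescribed pushforward of $\omega$ (the paper cites a single reference, [Michael, Theorem 2.14], both for this transitivity and for the threshold $g(H,\omega_H)$ beyond which $\omega_H$ lies in the image of $T\mapsto T_\ast\omega$). Two small repairs: since $H$ is killed by $\ell^n$ you should either reduce everything mod $\ell^n$ (as the paper does) or phrase the orbit--stabilizer step in terms of Haar measures of open subsets, because the cardinalities $|\Sp_{2g}(\Z_\ell)|$ and $|\mathrm{Stab}_{\Sp}(\psi)|$ you write are infinite; and the vanishing range in the theorem is governed by when $\omega_H$ fails to be in the image of $T\mapsto T_\ast\omega$, not merely by $S_H$ being empty for $g<\mu(H)/2$ --- but both are cosmetic.

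The genuine gap is your existence step. Writing $F_0=F_0^{(1)}\oplus F_0^{(2)}$ and asking for $F_0^{(1)}\in\GSP^{(q)}(V_1)$ ``lifting $F_H$ through $\psi_0|_{V_1}$'' does not reduce the difficulty: that constrained lift is exactly the nontrivial content of the theorem, now posed on the smaller symplectic space $V_1$. Concretely, you must choose preimages $v_i\in\psi_0^{-1}(F_H h_i)$, $w_i\in\psi_0^{-1}(F_H h_i')$ whose Gram matrix for $\omega|_{V_1}$ is exactly $q$ times the standard one, while the hypothesis $F_{H\ast}\omega_H=q\,\omega_H$ only controls the images of these pairings in $\wedge^2 H$; upgrading that to exact identities in $\Z/\ell^n$ requires precisely a Witt/Hensel-type correction argument, which you flag as an ``obstacle'' but never supply, so the proposal asserts rather than proves the key point. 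The paper avoids any construction: since $\GSP^{(q)}$ is a nonempty coset of $\Sp$, take any $F_0$ in it; then $F_H^{-1}T_0F_0$ is again a surjection and pushes $\omega$ to $F_H^{-1}(q\,\omega_H)=\omega_H$ (this is the only place $F_H\omega_H=q\,\omega_H$ is used), so by the same single-orbit theorem there is $g\in\Sp$ with $F_H^{-1}T_0F_0\,g=T_0$, and $F=F_0g$ is the desired intertwiner. With the transitivity statement you already invoke, this two-line argument closes your gap; without something like it, your final step is circular in the sense that it presupposes the existence result being proved.
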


\begin{proof}
By definition of $\mu_g,$ the expected number of such surjections equals the $\GSP_{2g}^{(q)}(\ZZ_\ell)$-Haar expected number of surjections  $T: \coker(P(F)) \twoheadrightarrow H,$ for which $F$ induces $F_H,$ and for which $T(\omega) = \omega_H.$  Such surjections are equivalent to the following data: 
\begin{itemize}
\item
a surjection $T: \mathbb{Z}_\ell^{2g} \twoheadrightarrow H$ for which 
\begin{itemize}
\item
$T\circ P(F) = 0,$  
\item
$T F = F_H T,$ and
\item
$T \omega = \omega_H.$
\end{itemize}
\end{itemize}
The condition $T\circ P(F)=0$ is actually redundant; since $P(F_H) = 0,$ the second condition above implies that 
$$T\circ P(F) = P(F_H)\circ T =0.$$
Suppose $H$ is killed by multiplication by $\ell^n.$  Then the expected number of surjections equals
\begin{equation} \label{expectednumberofsurjections}
\frac{\# \{ (T,F): T: (\ZZ / \ell^n)^{2g} \twoheadrightarrow H, F \in \GSP^{(q)}((\ZZ / \ell^n)^{2g},\omega), T\circ F = F_H \circ T, T(\omega) = \omega_H \ \}  }{\#\GSP_{2g}^{(q)}(\ZZ / \ell^n) }.
\end{equation}
By an analogue of Witt's extension theorem \cite[Theorem 2.14]{Michael}, the image of the mapping $T \mapsto T(\omega)$ from surjections to symplectic forms on $H$ contains 
$\omega_H$ precisely for  $g>g(H,\omega_H)$, and 
for every $g$, every fiber forms a single orbit $\mathcal{O}$ under $\Sp(\ZZ_\ell^{2g},\omega)$ (where the symplectic group acts by precomposition).  Furthermore, suppose that $TF = F_H 
T$ and $T(\omega) = \omega_H.$  Then for every $g \in \Sp(\ZZ_\ell^{2g},\omega)$
$$(T g) (g^{-1} F g) = TF g = F_H (Tg)$$
and
$$Tg(\omega) = T\omega = \omega_H.$$
Therefore, among the pairs $(F,T)$ enumerated in the numerator of \eqref{expectednumberofsurjections}, the fibers over every $T$ have the same size.  Now assuming it exists, fix $T_0$ satisfying $T_0(\omega) = \omega_H$ (If no such $T_0$ exists, than the moment is clearly 0).   Then 
\begin{align} \label{stabilizersurjection}
&\frac{\# \{ (T,F): T: (\ZZ / \ell^n)^{2g} \twoheadrightarrow H, F \in \GSP^{(q)}((\ZZ / \ell^n)^{2g},\omega), TF = F_H T, T(\omega) = \omega_H \ \}  }{\#\GSP_{2g}^{(q)}(\ZZ / \ell^n) } \nonumber \\
&= \frac{\# \{ F \in \GSP^{(q)}((\ZZ / \ell^n)^{2g},\omega): T_0 F = F_H T_0  \}  }{\#\GSP_{2g}^{(q)}(\ZZ / \ell^n) } \cdot \# \mathcal{O} \nonumber \\
&= \frac{\# \{ F \in \GSP^{(q)}((\ZZ / \ell^n)^{2g},\omega): T_0 F = F_H T_0  \}  }{\#\GSP_{2g}^{(q)}(\ZZ / \ell^n) } \cdot \frac{\# \Sp((\ZZ / \ell^n)^{2g}, \omega)}{\# \mathrm{Stab}_{\Sp((\ZZ/\ell^n)^{2g},\omega)}(T_0)} \nonumber \\
&=  \frac{\# \{ F \in \GSP^{(q)}((\ZZ / \ell^n)^{2g},\omega): T_0 F = F_H T_0  \}  }{\# \{ g \in \Sp((\ZZ / \ell^n)^{2g},\omega): T_0 g = T_0 \}  }
\end{align}
The set in the numerator of \eqref{stabilizersurjection} is either empty or is a torsor for the group in the denominator.  Thus we only need to show that a single such $F$ exists.

Now, to show this, consider first any element $F_0\in \Sp((\ZZ/\ell^n)^{2g},\omega)$. Then $F_H^{-1}T_0F_0$ is a surjection from 
$((\ZZ/\ell^n)^{2g},\omega)$ to $(H,\omega_H)$. Thus, by \cite[Theorem 2.14]{Michael}, there exists an element $g\in\Sp((\ZZ/\ell^n)^{2g},\omega)$ satisfying
$$F_H^{-1}T_0F_0g=T_0$$ and therefore $T_0F_0g=F_HT_0$. Thus we may take $F=F_0g$ and this completes the proof.
\end{proof}

\subsection{The existence of a limit measure}

In light of the results of the previous section, and analogous results for the Cohen-Lenstra measure (\cite{EVW},\cite[Theorem 8.2]{Wood}) we make the following conjecture, which roughly says that
the moments constitute enough information to recover the full measure in cases of interest:

\begin{conj}\label{limit}

The measures $\mu_g$ converge to a measure $\mu$ on $\mathcal{E}$, such that the expected number of surjections from a $\mu$-random element to any element in 
$\mathcal{E}$ is 1. Moreover, this property characterizes $\mu$.

\end{conj}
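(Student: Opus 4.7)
The plan is to mirror Lemma \ref{unique}, adapted to the category $\mathcal{E}$, and establish convergence of $\mu_g$ by a compactness-plus-uniqueness argument. The argument proceeds in three stages: a uniform pointwise upper bound on $\mu_g$, extraction of a pointwise limit $\mu$ and identification of its moments, and uniqueness of any measure with these moments via inversion of a moment-operator. For the first stage, Theorem \ref{momentswithsymplecticstructure} applied to an arbitrary $(H, F_H, \omega_H) \in \mathcal{E}$ gives
$$\mu_g((H, F_H, \omega_H)) \leq \frac{1}{\# \Aut_{\mathcal{E}}(H, F_H, \omega_H)}$$
for all $g$ sufficiently large, since the contribution to the first moment from the isomorphism class of $(H, F_H, \omega_H)$ itself is exactly $\mu_g((H, F_H, \omega_H)) \cdot \# \Aut_{\mathcal{E}}(H, F_H, \omega_H)$ and the full moment equals $1$.

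Next, using this uniform bound, a standard diagonal argument extracts a subsequence along which $\mu_{g_k}$ converges pointwise to some nonnegative $\mu$ on $\mathcal{E}$. Fatou's lemma applied to the first-moment identity gives $\sum_G \# \Sur_{\mathcal{E}}(G, (H, F_H, \omega_H)) \mu(G) \leq 1$; to promote this to equality, and thereby rule out escape of mass, one needs uniform-in-$g$ tail bounds for the moment sums. Following the approach of \cite{EVW} and \cite[Theorem 8.2]{Wood}, such tail bounds can be read off from higher moments $\sum_G \# \Sur_{\mathcal{E}}(G, (H, F_H, \omega_H))^k \mu_g(G)$; the higher moments are in turn computable by reducing to first moments against direct powers in $\mathcal{E}$ via inclusion-exclusion, then applying Theorem \ref{momentswithsymplecticstructure}. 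The upshot is that every subsequential pointwise limit $\mu$ has all first moments equal to $1$, so once uniqueness is established the full sequence $\mu_g$ converges to the common limit.

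Third, for uniqueness, following Lemma \ref{unique}, introduce the moment-operator $U$ on $L^\infty(\mathcal{E})$ with entries
$$U_{(H, F_H, \omega_H), (H', F_{H'}, \omega_{H'})} = \frac{\# \Sur_{\mathcal{E}}((H, F_H, \omega_H), (H', F_{H'}, \omega_{H'}))}{\# \Aut_{\mathcal{E}}(H, F_H, \omega_H)}.$$
The moment condition on $\mu$ is equivalent, via $V((H, F_H, \omega_H)) := \mu((H, F_H, \omega_H)) \cdot \# \Aut_{\mathcal{E}}(H, F_H, \omega_H) \in L^\infty(\mathcal{E})$, to $UV = \mathbf{1}$, so inverting $U$ by a Neumann series yields uniqueness. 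The main obstacle is the estimate $\|U - I\|_\infty < 1$, i.e.\ bounding
$$\sum_{(H', F_{H'}, \omega_{H'}) \not\cong (H, F_H, \omega_H)} \frac{\# \Sur_{\mathcal{E}}((H, F_H, \omega_H), (H', F_{H'}, \omega_{H'}))}{\# \Aut_{\mathcal{E}}(H, F_H, \omega_H)}$$
uniformly away from $1$ in $(H, F_H, \omega_H)$. In Lemma \ref{unique} this rested on Euler's pentagonal number identity together with $\F_R \neq \F_2$; in the present symplectic setting the constraint that a surjection push $\omega_H$ to $\omega_{H'}$ further restricts which quotients can appear, so I expect one needs a more intricate generating-function identity for quotients in $\mathcal{E}$. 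Proving such an identity in general, and handling the analogous small-residue-field exceptions, is the principal difficulty and is presumably why the statement remains a conjecture.
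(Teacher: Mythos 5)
You are attempting to prove Conjecture \ref{limit}, and the paper itself offers no proof of it: it is stated as a conjecture, and the authors prove it only in the degenerate case $\ell \nmid P(q)$ (Theorem \ref{uniquenesswithoutsymplecticstructure}), where $\omega_G$ is forced to vanish, $\mathcal{E}$ reduces to the category of finite $R=\Z_{\ell}[x]/P(x)$-modules, and the linear model of \S2 (Theorem \ref{convmeas}, Lemma \ref{unique}) takes over. Your outline --- the uniform bound $\mu_g \le 1/\#\Aut_{\mathcal{E}}$, extraction of subsequential limits with moments equal to $1$, and uniqueness via inverting the moment operator $U$ --- is the natural adaptation of the paper's linearized argument, and you correctly identify that the uniqueness step is where the matter stalls. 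But the gap is deeper than the ``more intricate generating-function identity'' plus ``small-residue-field exceptions'' you suggest. The Neumann-series inversion already fails in the paper's own linearized setting whenever $R$ is a product of several local rings: the row sums of $U-1$ are $c_R^{-1}-1$, and $c_R$ can drop below $1/2$, so $\|U-1\|_{\infty}\ge 1$ and no pentagonal-number-type refinement rescues the operator bound. The paper does not repair the estimate; it abandons the inversion entirely and instead factors $\mu=\prod_j \mu_{R_j}$ over the local factors of $R$, proving uniqueness by an induction that tests moments against modules of the form $M_1\times M_2$. In the genuinely symplectic case ($\ell \mid P(q)$) there is no analogous product decomposition of $\mathcal{E}$, and no substitute argument is known.

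There is a second, more structural obstruction you should be aware of: the paper remarks that in the presence of the symplectic datum the authors do not even have a conjectural closed form for the limit --- the naive guess $\mu \propto 1/\#\Aut_{\mathcal{E}}(G,F_G,\omega_G)$ is inconsistent with Garton's computations \cite{Garton}. So the strategy of Theorem \ref{uniquenesswithoutsymplecticstructure}, namely exhibiting an explicit candidate measure with the right moments (Theorem \ref{convmeas}) and then quoting a uniqueness lemma, has no starting point here. Your first two stages are plausible (the pointwise bound follows from Theorem \ref{momentswithsymplecticstructure} exactly as you say, and second moments can in principle be reduced to first moments against decorated subgroups of $H\times H$ to control tails), but without either an explicit limit or a uniqueness mechanism valid beyond the local linear case, the proposal does not prove the statement --- consistent with its status as an open conjecture in the paper.
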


We devote the rest of this section to proving conjecture \ref{limit} in a couple special cases. Most notably, we can use the results of \S2 to prove the conjecture in the case where the 
symplectic structure `doesn't come up'. In that case we can use the much easier additive model in \S2 as opposed to the model with symplectic matrices. We `get rid of' the
symplectic structure as follows: If $P(q)$ is not divisible by $\ell$, then since $\omega_G$ is killed by both $P(q)$ and a power of $\ell$ it is forced to be 0, so $\mathcal{E}$ is equivalent to the category of finite $\Z_{\ell}[x]/P(x)$ modules.

%Recall that $\mathcal{F}$ is the collection of isomorphism classes of pairs $(G,F_G)$ for which $G$ is a finite abelian $\ell$-group and $F_G$ is an automorphism of $G$ of order $n.$

\begin{thm} \label{uniquenesswithoutsymplecticstructure}
In the notation above, assume that $P(q)$ is not divisible by $\ell$, and assume that $\ell$ is odd. Then conjecture \ref{limit} holds. Moreover, $\mu$ is supported on precisely the 
$R=\Z_{\ell}[x]/P(x)$ modules of projective dimension 1, and assigns such a module $M$ measure $\mu(M)=\frac{c}{\#\Aut(M)}$ where $c=\prod_{k_j}\prod_{i=1}^{\infty}(1-|k_j|^{-i})$ 
and the product is over the finite residue fields of $R$. 

\end{thm}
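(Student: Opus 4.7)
The first step is the reduction alluded to just before the theorem: from $P(F_G)\omega_G = 0$ and $F_G\omega_G = q\omega_G$ we obtain $P(q)\omega_G = 0$, hence $\omega_G = 0$ since $\omega_G$ is $\ell$-power torsion and $\ell\nmid P(q)$. Thus $\mathcal{E}$ is equivalent to the category of finite modules over $R := \Z_\ell[x]/P(x)$, and each $\mu_g$ is a probability measure on this set. Since $R$ is a finite $\Z_\ell$-algebra it is semilocal: $R = \prod_j R_j$ with each $R_j$ a finite local $\Z_\ell$-algebra with residue field $k_j$. Every finite $R$-module decomposes canonically as $M = \prod_j M_j$, and $\#\Aut_R$, $\#\Sur_R$, projective dimension, and the surjection-counting moment operator all factor tensorially across this decomposition.

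Define the candidate measure $\mu := \prod_j \mu_{R_j}$, using the measures from Theorem \ref{convmeas}. Then $\mu(M) = c/\#\Aut_R(M)$ with $c = \prod_j c_{R_j}$, and $\mu$ is supported precisely on $R$-modules of projective dimension $1$. Multiplicativity yields the moment property
\[
\sum_M \#\Sur_R(M,M_0)\,\mu(M) = \prod_j \Bigl(\sum_{M_j} \#\Sur_{R_j}(M_j,M_{0,j})\,\mu_{R_j}(M_j)\Bigr) = 1
\]
for every $M_0 = \prod_j M_{0,j}$. Uniqueness of $\mu$ among $L^1$ functions with all moments equal $1$ follows by applying Lemma \ref{unique} factorwise: each $|k_j| \geq \ell \geq 3$ (since $\ell$ is odd), so each local moment operator $U_j$ is invertible with uniformly $\ell^1$-summable rows; the global moment operator is the tensor product $U = \bigotimes_j U_j$, invertible with $U^{-1} = \bigotimes_j U_j^{-1}$ again having uniformly $\ell^1$-summable rows.

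For the convergence $\mu_g \to \mu$, inspection of the proof of Theorem \ref{momentswithsymplecticstructure} (where the numerator is either empty or a single torsor) shows that the $\mu_g$-moment $m_g(M_0)$ takes only the values $0$ and $1$; in particular $m_g(M_0) \to 1$ pointwise and $|m_g(M_0)| \leq 1$ uniformly in $g$. Writing $V_g(M) := \mu_g(M)\,\#\Aut_R(M)$, the moment condition reads $m_g = U V_g$, so $V_g = U^{-1} m_g$. Dominated convergence — using the uniform $\ell^1$ row bound on $U^{-1}$ from the tensor decomposition — then gives $V_g(M) \to (U^{-1}\vec{1})(M) = \mu(M)\,\#\Aut_R(M)$ for every $M$, so $\mu_g \to \mu$. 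The main technical point is extending Lemma \ref{unique} from the local to the semilocal setting via the tensor product of local inversions; this avoids the global bound $\|U - I\| < 1$ (which would require $c > 1/2$ and can fail when $R$ has many local factors), yet still furnishes a bounded $U^{-1}$ with $\ell^1$-summable rows, enabling the interchange of sum and limit that drives the convergence.
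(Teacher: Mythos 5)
Your argument is correct, and its overall skeleton (reduction to $\omega_G=0$ via $\ell\nmid P(q)$, the product measure $\prod_j\mu_{R_j}$ built from Theorem \ref{convmeas}, and recovery of $\mu_g$ from its $0/1$-valued moments by inverting the moment operator and letting $g\to\infty$) coincides with the paper's. The one place you genuinely diverge is the semilocal uniqueness step. The paper handles it by induction on the number of local factors: writing $R=R_1\times R_2$, it forms the partial moments $a(M_1,M_2^0)=\sum_{M_2}m(M_1\times M_2)\#\Sur(M_2,M_2^0)$ and applies the local uniqueness lemma (in its $L^1$-function form) twice to conclude $m(M_1\times M_2)=\mu_{R_1}(M_1)\mu_{R_2}(M_2)$, thereby avoiding any global inverse operator. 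You instead invert the global operator directly as $U^{-1}=\bigotimes_j U_j^{-1}$, using that $R$ has only finitely many local factors and each $|k_j|\ge \ell\ge 3$, so the rows of $U^{-1}$ are uniformly $\ell^1$-bounded even though the naive global Neumann series can fail once $c\le 1/2$. Both routes reduce to Lemma \ref{unique}; yours requires the (routine, but worth writing out) verification that the tensor product of the local inverses really is a two-sided inverse on $L^{\infty}(S_R)$ with the claimed row bounds, i.e. the factorwise Fubini interchanges. In exchange it buys a uniform statement that also serves the convergence argument: the paper asserts that the local convergence proof transfers to the semilocal case ``by the exact same proof,'' which implicitly needs a bounded inverse with $\ell^1$ rows there as well, and your tensor-product construction supplies exactly that object explicitly.
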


\begin{proof} 
Assume first that $R$ is a local ring. Then note that we have already constructed one measure satisfying the above hypothesis on moments in Theorem \ref{convmeas}, and Lemma
\ref{unique} guarantees that these moments specify a unique measure. Hence it is sufficient to show that the $\mu_g$ converge to a measure with 1 expected surjection to each finite $R$
module. Note that by Theorem \ref{momentswithsymplecticstructure} the $\mu_g$ satisfy $\mu_g(M)\leq\frac{1}{\#\Aut M}$ for any finite $R$-module $M$. Letting $S_R$ be
the set of finite $R$-modules as in \S2, there is an operator $U$ on $L^{\infty}(S_R)$ given by $U_{M,M'} = \frac{\#\Sur(M,M')}{\#\Aut(M)}$. Now the vector $V_g\in L^{\infty}(S_R)$ given by
 $V_M=\mu_g(M)\#\Aut(M)$ has $L^{\infty}$ norm bounded by 1. Further, by Theorem \ref{momentswithsymplecticstructure} the product $UV_g$ is a vector $W_g$ consisting 
 of $0$ and $1$ entries, whose entries each eventually become $1$ as $g$ increases. Thus, we can write $V_g=U^{-1}(W_g)$. Since as in Lemma \ref{unique} the operator $U^{-1}$ is bounded, it can be 
 represented as an infinite matrix with rows in $L^1$ with uniformly bounded $L^1$ norm. Since the $W_g$ have $L^{\infty}$ norm bounded by $1$ and each entry eventually stabilizes,
 we can conclude that the $\mu_g$ converge to $\mu$ in the weak-* topology, as desired.
 
 Now, even if $R$ is not local, it is $\ell$-adically complete and $R/\ell$ is artinian, so $R$ is a product of local rings $R=\prod_j R_j$. It follows that we can take 
 $\mu=\prod_{j} \mu_{R_j}$ and this measure will have all the correct moments, and the exact same proof as in the previous paragraph shows that the $\mu_g$ converge to $\mu$. Thus it 
 only remains to show that $\mu$ is determined by its moments. Note here that the exact same proof as in the local case won't work, since the constant $c$ could be less than $\frac{1}{2}$.
Going by induction on the number of local rings that $R$ is a product of, we may write $R=R_1\times R_2$ where $R_{1,2}$ have the property that the corresponding measures 
$\mu_{R_{1,2}}$ are determined by their moments. Now, suppose that $m$ is any other measure with the correct moments. 
For an $R_1$-module $M_1$ and an $R_2$-module $M_2^0$ we 
 let $$a(M_1,M^0_2):=\sum_{M_2}m(M_1\times M_2)\#\Sur(M_2,M^0_2).$$
 
 Then it follows that for each $M_2^0,M_1^0$, that $$\sum_{M_1} a(M_1,M^0_2)\#\Sur(M_1,M_1^0)=1.$$ Thus, by our induction assumption for $R_2$ it follows that 
 $a(M_1,M_2)=\mu_{R_1}(M_1)$.  Now, by our induction assumption for $R_1$ we learn that $$m(M_1\times M_2) = \mu_{R_1}(M_1)\times\mu_{R_2}(M_2)=\mu(M_1\times M_2)$$ 
 as desired. 

%Suppose $m$ is any other measure satisfying the same hypothesis.  Then $f(G,F_G) := \frac{\# \Aut(G,F_G)}{N_{(G,F_G)}} \cdot m(G)$ is the unique solution of the system of equations
%$$\sum_{(G,F_G)} \frac{\# \mathrm{Surj}((G,F_G),(H,F_H))}{N_{(H,F_H)}} \cdot \frac{N_{(G,F_G)}}{ \# \mathrm{Aut}(G,F_G)} \cdot f(G,F_G) = 1.$$
%The vectors $V= (f(G,F_G))$ and the all 1's vector $\mathbf{1}$ have bounded entries.  Let $U$ be the infinite matrix with entries
%$$U_{(H,F_H),(G,F_G)} := \left( \frac{\# \mathrm{Surj}((G,F_G),(H,F_H))}{N_{(H,F_H)}} \cdot \frac{N_{(G,F_G)}}{ \# \mathrm{Aut}(G,F_G)} \right).$$ 
%The row sums are bounded, hence $U$ is a bounded operator on $L^\infty.$ \textcolor{red}{True if $N_{(G,F_G)} = 1$ for all $G.$  Bounds on the row sums more generally?}.  In fact, $N = U - 1$ has operator norm strictly less than 1 because the row sums are all strictly less than 1 \textcolor{red}{PROVE THIS!!}.  Therefore, $U = 1 + N$ is invertible and $UV = \mathbf{1}$ has a unique solution.     
\end{proof}

In the case where the symplectic structure is present, we don't even have a good conjecture as to what the limiting measure  in conjecture \ref{limit} should be. It is natural to guess that
it is proportional to the inverse of the size of the automorphism group - where now one only takes automorphisms if they preserve $\omega_G$ - but this does not agree with computations 
of Garton\cite{Garton}! We think it would be very interesting to at least develop a plausible heuristic.

%\begin{remark}
%
%Another case where one can prove that the measure is determined from the moments is when $P(x)$ is linear, as follows directly from \cite[Theorem ?]{Wood}. It seems likely that one
%can relax this condition to $\Z_{\ell}[x]/P(x)$ is the product of rings of integers of a local field.
%
%\end{remark}

\subsection{Moments approximately 1 implies approximately Cohen-Lenstra} \label{momentsapproximately1}
Fix a finite subset $S' \subset \mathcal{E}.$  Let $\mathrm{Conf}_g$ denote the moduli space of $n$ distinct, unordered unlabelled points in $\mathbb{A}^1.$  Let $\mathcal{C} \rightarrow \mathrm{Conf}_g(\mathbb{A}_1)$ denote the associated family of hyperelliptic curves.  Let $F_x$ denote Frobenius acting on the $\ell$-adic Tate-module of $\mathrm{Jac}(C_x).$  For every $g,$ let $\nu_g$ be the discrete probability measure
$$\nu_g = \frac{1}{\# \mathrm{Conf}_g(\F_q)} \sum_{x \in \mathrm{Conf}_g(\F_q)} \delta_{\coker P(F_x)}.$$

Building on the work of \cite{EVW}, we will show in \S \ref{geometry} that for any $\delta > 0,$ there is some $Q(S',\delta), G(S',\delta) \gg 0$ such that for all $q \geq Q(S,\delta)$, provided $g \geq G(S,\delta),$ then
\begin{equation} \label{geometryapproximatemoments}
\mathrm{Expectation}_{\nu_g}(\# \mathrm{Surj}(\bullet, A)) \in [1-\delta, 1 + \delta] \text{ for all } A \in S'.
\end{equation} 

Geometry gives us access to moments, and we would like to recover as much information about the measures $\nu_g$ as we can from a large set of approximate moments as in \eqref{geometryapproximatemoments}.

\begin{definition} \label{subexponentialenlargements}
Let $R = \ZZ_\ell[x] / P(x).$  Let $A$ be a finite $R$-module.  An \emph{enlargement} $A'$ of $A$ is an $R$-module admitting a surjection onto $A$ whose kernel is a simple $R$-module.  An \emph{$s$-enlargement} $B$ of $A$ is a finite $R$-module admitting a surjection onto $A$ whose kernel has $R$-length equal to $s.$  

Say that $R$ has the \emph{few enlargements property} if for every finite $R$-module $A,$ the number of isomorphism classes of $s$-enlargements of $A$ is subexponential in $s.$
\end{definition}

\begin{lem} \label{productshavefewenlargements}
If $R$ is a product of maximal orders, then $R$ satisfies the few enlargements property.
\end{lem}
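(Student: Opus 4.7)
The plan is to reduce to counting isomorphism classes of finite $R$-modules of a fixed length, using that a product of maximal orders has a very simple module theory.

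First I would observe that a maximal order in a finite-dimensional $\Q_\ell$-algebra is a product of maximal orders in its local factors, and a maximal order in a finite field extension $K / \Q_\ell$ is just $\Oo_K$, a complete discrete valuation ring. Thus the hypothesis is equivalent to $R = \prod_{j=1}^r \Oo_j$, a finite product of complete DVRs. Since the $\Oo_j$ have pairwise coprime supports on $\spec R$, every finite $R$-module $B$ decomposes canonically as $B = \prod_j B_j$ with $B_j$ a finite $\Oo_j$-module, and the $R$-length of $B$ equals the sum of the $\Oo_j$-lengths of the $B_j$.

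Second, I would note that any $s$-enlargement $B$ of $A$ has $R$-length exactly $\mathrm{length}_R(A)+s$, so it suffices to prove that the total number of isomorphism classes of finite $R$-modules of length $n$ grows subexponentially in $n$. By the structure theorem for finitely generated modules over a DVR, the isomorphism classes of finite $\Oo_j$-modules of length $m$ are in bijection with partitions of $m$, so their number equals the partition function $p(m)$. Combining with the product decomposition,
\begin{equation*}
\#\{\text{iso classes of finite $R$-modules of length } n\} \;=\; \sum_{n_1+\cdots+n_r = n} \prod_{j=1}^r p(n_j).
\end{equation*}

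Finally I would bound this: the number of compositions of $n$ into $r$ nonnegative parts is $\binom{n+r-1}{r-1}$, which is polynomial in $n$ for fixed $r$, and by the Hardy–Ramanujan asymptotic $p(m) = e^{O(\sqrt{m})}$, so $\prod_j p(n_j) \leq p(n)^r = e^{O(\sqrt{n})}$. Therefore the total count is $e^{O(\sqrt{n})}$, which is subexponential, and taking $n = \mathrm{length}_R(A)+s$ proves the few enlargements property. There is no real obstacle here; the only thing to be careful about is that "subexponential in $s$" allows absorbing the fixed offset $\mathrm{length}_R(A)$ and the polynomial prefactor coming from the number of compositions.
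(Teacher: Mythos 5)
Your proof is correct and is essentially the same argument as the paper's, which simply defers to \cite[Lemma 8.4]{EVW}: there, too, one bounds the number of $s$-enlargements by the number of isomorphism classes of modules of length $\mathrm{length}_R(A)+s$, decomposes over the local (DVR) factors, and uses the subexponential growth of the partition function. Your write-up just makes that cited argument explicit, so nothing further is needed.
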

\begin{proof}  
This follows exactly as in the argument from \cite[Lemma 8.4]{EVW}. 
\end{proof}

\begin{prop} \label{approximatemomentsimpliesapproximatelyCL} 
Suppose that $\ell$ does not divide $P(q),$ and  $R = \ZZ_\ell[x] / P(x)$ has the few enlargements property.  Let $\nu$ be a probability measure on $\mathcal{E}.$  Fix a finite subset $S \subset \mathcal{E}.$  Fix $\epsilon > 0.$  There exist $\delta > 0$ and a finite subset $S' \subset \mathcal{E}$ satisfying: 
\begin{align*}
\mathrm{Expectation}_\nu(\# \mathrm{Surj}(\bullet, A')) &\in [1-\delta,1 + \delta] \text{ for all } A' \in S' \\
\implies | \nu(A) - \mu_R(A)| &< \epsilon \text{ for all } A \in S.
\end{align*}
\end{prop}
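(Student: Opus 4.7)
The plan is to prove a quantitative version of the inversion argument behind Lemma~\ref{unique}, following the strategy of \cite[Lemma 8.4]{EVW}. Since $\ell \nmid P(q)$, any $(G,F_G,\omega) \in \mathcal{E}$ has $\omega = 0$, so $\mathcal{E}$ is equivalent to the set $S_R$ of finite $R$-modules; it suffices to show $|\nu(A) - \mu_R(A)| < \epsilon$ for every $A \in S$. For a probability measure $\eta$ on $S_R$, set $V^\eta_A := \eta(A)\#\Aut(A)$; with the operator $U$ from Lemma~\ref{unique}, we have $(UV^\eta)_{A'} = \mathrm{Expectation}_\eta(\#\Sur(\cdot,A'))$, so the hypothesis reads $|(UV^\nu)_{A'} - 1| < \delta$ for $A' \in S'$, while $UV^{\mu_R} = \mathbf{1}$ identically.

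Because $\ell$ is odd, every residue field of $R$ has order at least $3$, so by the proof of Lemma~\ref{unique}, $\|I - U\| < 1$ and $U$ is invertible via the Neumann series $U^{-1} = \sum_{j \geq 0}(I - U)^j$. Applying this yields, for any $A \in S$,
\[
V^\nu_A - V^{\mu_R}_A \;=\; \sum_{A'}(U^{-1})_{A',A}\bigl((UV^\nu)_{A'} - 1\bigr),
\]
and by the triangular structure of $U$ with respect to the quotient order on $S_R$, the entry $(U^{-1})_{A',A}$ is supported on $A'$ being an enlargement of $A$. Split this sum into a ``close'' part over $A' \in S'$ (bounded by $\delta \cdot \sum_{A' \in S'}|(U^{-1})_{A',A}|$, hence $O(\delta)$) and a ``tail'' over $A' \notin S'$.

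To control the tail, choose $S'$ to contain all $s$-enlargements of each $A \in S$ for $s$ up to a large cutoff $s_0$; by the few enlargements hypothesis on $R$ this collection is finite. The remaining terms correspond to enlargements $A'$ of $A$ of length exceeding $s_0$: the few enlargements property gives a subexponential count at each length, while the Neumann series expansion of $(U^{-1})_{A',A}$ involves chains $A' \twoheadrightarrow \cdots \twoheadrightarrow A$ whose lengths are bounded by the enlargement length, yielding exponential decay coming from $\|I-U\| < 1$. Thus $\sum_{A' \notin S'}|(U^{-1})_{A',A}|$ becomes arbitrarily small as $s_0 \to \infty$.

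The main obstacle is that $(UV^\nu)_{A'} - 1$ has no a priori bound for $A' \notin S'$, since the moment hypothesis places no direct constraint there. To fix this, enlarge $S'$ further to include a finite set of ``test'' modules: the simple $R$-modules $\F_{R_i}$ (one per local factor $R_i$ of $R$) together with some of their low-rank direct sums. The hypothesized moment bounds on these modules yield upper bounds on $\mathrm{Expectation}_\nu\!\left(|\F_R|^{k\,d(M)}\right)$ for $k \leq k_0$, where $d(M)$ is the minimal number of $R$-generators; combined with the coarse estimate $\#\Sur(M,A') \leq |A'|^{d(M)}$, this produces an upper bound on $(UV^\nu)_{A'}$ that grows at most polynomially in $|A'|$. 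Since this polynomial growth is dominated by the exponential decay of the Neumann-series tail, the overall tail is summable. Choosing $s_0$ and $k_0$ sufficiently large and then $\delta$ sufficiently small yields $|V^\nu_A - V^{\mu_R}_A| < \epsilon\cdot\#\Aut(A)$, hence $|\nu(A) - \mu_R(A)| < \epsilon$, for every $A \in S$.
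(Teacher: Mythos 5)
Your reduction to $S_R$ and the reformulation of the hypothesis as $|(UV^\nu)_{A'}-1|<\delta$ for $A'\in S'$ are fine, but two subsequent steps have genuine gaps (the paper's own proof is simply the citation to \cite[Prop.\ 8.3]{EVW}, whose argument is organized precisely so as to avoid them). First, the Neumann-series inversion of $U$ is not available in the stated generality: $\|I-U\|\le c_R^{-1}-1$, and $c_R>\frac12$ is guaranteed only when $R$ is \emph{local} with residue field of size at least $3$. Here $R=\Z_\ell[x]/P(x)$ is typically a product of local rings, and $c_R$ is the product of the local constants, which can drop below $\frac12$ --- e.g.\ $R=\Z_3[x]/(x^2-1)$, exactly the ring used in \S 5. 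The paper flags this failure explicitly in the proof of Theorem \ref{uniquenesswithoutsymplecticstructure} and treats the non-local case by induction over the local factors rather than by the operator-norm bound; your argument inherits the same problem and does not address it.

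Second, and more seriously, your fix for what you correctly identify as ``the main obstacle'' does not work. Finitely many test-module moments cannot force $\mathrm{Expectation}_\nu(\#\mathrm{Surj}(\cdot,A'))$ to grow at most polynomially in $|A'|$: let $k$ be a simple $R$-module, fix any finite $S'$ and $\delta>0$, choose $k_0$ larger than the ranks occurring in $S'$, and perturb $\mu_R$ by placing mass $|k|^{-(k_0+1)d}$ on $k^{\oplus d}$. For $d$ large all moments over $S'$ are within $\delta$ of $1$, yet the moment at $A'=k^{\oplus d}$ is at least $|k|^{-(k_0+1)d}\#\Aut_R(k^{\oplus d})\gg |k|^{d^2-(k_0+2)d}$, which is superpolynomial in $|A'|=|k|^{d}$; H\"older bounds low exponential moments of $d(M)$ by high ones, not the reverse. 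Without any such bound, the identity $V^\nu-V^{\mu_R}=U^{-1}\bigl(UV^\nu-\mathbf{1}\bigr)$ is itself unjustified ($V^\nu$ need not lie in $L^\infty$, and the interchange of summations requires exactly the absolute convergence in question), and the tail of your split sum is uncontrolled. Relatedly, the decay you attribute to chain lengths in the Neumann series is not the actual mechanism: an $s$-enlargement with $s$ huge is reached in a single step of $U-I$, so smallness of $(U^{-1})_{A',A}$ over large $A'$ must come from entrywise bounds on $\#\mathrm{Surj}(A',A)/\#\Aut(A')$ together with the few-enlargements count. A correct argument must use only the finitely many approximate moments on $S'$ --- as in \cite[Prop.\ 8.3]{EVW}, where the contribution of large modules to each fixed moment and to the mass is controlled directly via moments at the (subexponentially many) enlargements placed inside $S'$, after which one concludes by the uniqueness statement underlying Theorem \ref{uniquenesswithoutsymplecticstructure}.
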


\begin{proof}
The hypothesis $\ell \nmid P(q)$ ensures that the symplectic form equals 0.  The argument from \cite[Proposition 8.3]{EVW} carries over verbatim to the present context.
\end{proof}

\section{Moments of \'{E}tale Group Schemes via the Lefschetz Trace Formula} \label{geometry}

In this section we define moduli spaces over $\F_q$, whose $\F_q$-points correspond to surjections from torsion subgroups of 
Jacobians of hyperelliptic curves to \'{e}tale group schemes $\mathcal{G}$ together with a section of $\wedge^2\mathcal{G}(1)$, and prove Theorem \ref{main}. In particular, we identify
the rationally defined geometric components of the moduli spaces considered in \cite{EVW} with the set $\wedge^2\mathcal{G}(1)(\F_q)$

\subsection{Multilinear algebra for \'{E}tale group schemes}
Let $S$ be a scheme.  Let $G / S$ be a finite \'{E}tale group scheme.

\begin{prop} \label{wedge2Etalegroupschemes}
Let $G / S$ be a finite, commutative \'{E}tale group scheme.  There exists a finite \'{E}tale group scheme $\wedge^2 G / S$ and a morphism $\iota: G \times_S G \rightarrow \wedge^2 G$ satisfying the following universal property: 
\begin{itemize}
\item[(a)]
$\iota$ is biadditive, i.e. for all $S$-schemes $T$ and all $x,y,z \in G(T),$
$$\iota(x + y,z) = \iota(x,z) + \iota(y,z) \text{ and } \iota(z, x+y) = \iota(z,x) + \iota(z,y).$$

\item[(b)]
$\iota$ is alternating, i.e. for all $S$-schemes $T$ and all $v \in G(T),$
$$\iota(v,v) = 0 \in \left( \wedge^2 G \right)(T).$$

\item[(c)]
The morphism $\iota$ is universal with respect to the properties (a),(b):  if $f: G \times_S G \rightarrow H$ is a biadditive, alternating morphism to commutative group scheme $H / S,$ there is a unique $S$-group scheme morphism $\pi$ for which $f = \pi \circ \iota.$  
\end{itemize}
\end{prop}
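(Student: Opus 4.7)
The plan is to construct $\wedge^2 G$ by \'etale descent, reducing everything to the standard exterior square in the category of finite abelian groups.

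First, I would use that any finite \'etale group scheme becomes constant after pulling back along a suitable \'etale cover. Pick an \'etale surjection $S' \to S$ and an isomorphism $G \times_S S' \cong \underline{A}_{S'}$ of group schemes, where $A$ is a finite abelian group (if $S'$ is disconnected, $A$ may vary among its components; the argument is identical). In the category of finite abelian groups, the exterior square $\wedge^2 A$ and its universal alternating biadditive map $A \times A \to \wedge^2 A$ is a standard construction, and I would set $(\wedge^2 G)_{S'} := \underline{\wedge^2 A}_{S'}$ with $\iota_{S'}$ the constant-group-scheme version of this pairing.

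Next, I would produce the descent datum. The group scheme $G$ is determined on $S$ by the pair $(G_{S'},\varphi)$ where $\varphi: p_1^\ast G_{S'} \cong p_2^\ast G_{S'}$ is an isomorphism over $S' \times_S S'$ satisfying the usual cocycle condition. Since $A \mapsto \wedge^2 A$ is a functor on finite abelian groups, $\varphi$ induces a canonical isomorphism $\wedge^2 \varphi$ between the two pullbacks of $\underline{\wedge^2 A}_{S'}$, and functoriality makes the cocycle condition for $\wedge^2 \varphi$ automatic. Effective descent for finite \'etale schemes along \'etale covers then produces a finite \'etale group scheme $\wedge^2 G / S$. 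The local pairings $\iota_{S'}$ are compatible with these descent isomorphisms by naturality, so they glue to a morphism $\iota: G \times_S G \to \wedge^2 G$; its biadditivity and alternating properties descend from the corresponding properties over $S'$.

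For the universal property, given a biadditive alternating $f: G \times_S G \to H$, the classical universal property of $\wedge^2 A$ produces, after pullback to $S'$, a unique $\pi_{S'}: \underline{\wedge^2 A}_{S'} \to H_{S'}$ with $f_{S'} = \pi_{S'} \circ \iota_{S'}$; uniqueness forces the two pullbacks of $\pi_{S'}$ to $S' \times_S S'$ to agree, so $\pi_{S'}$ descends to the required $\pi: \wedge^2 G \to H$. The main technical point to verify carefully is the compatibility between the functoriality of $\wedge^2$ and effective descent for finite \'etale schemes and for morphisms of $S$-schemes; beyond that the proof is bookkeeping. Conceptually the construction is simply the functor $\wedge^2$ applied in the category of locally constant sheaves of finite abelian groups on the small \'etale site of $S$.
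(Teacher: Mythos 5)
Your proposal is correct and follows essentially the same route as the paper: construct $\wedge^2$ for constant group schemes from the classical exterior square of abelian groups, then glue via \'etale descent (your use of functoriality of $\wedge^2$ to produce the descent datum is just a repackaging of the paper's appeal to the uniqueness clause of the universal property on double and triple overlaps), and obtain the universal property over $S$ by descent for morphisms. No substantive differences to flag.
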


\begin{proof}
Let $ \underline{A}_S$ denote the constant group scheme on finite abelian group $B.$  Let $H / S$ be another group scheme.  A morphism $\underline{A}_S$ is determined by a collection of sections $s_a \in H(S)$ indexed by $a \in A$ satisfying $s_{a + b} = s_a + s_b$ for all $a,b \in A.$  Let $\underline{a} \in \underline{A}(S)$ denote the constant section determined by $a \in A.$  The morphism 
\begin{align*}
\underline{A}_S \times \underline{A}_S &\rightarrow \underline{\wedge^2 A }_S \\
(\underline{a},\underline{b}) &\mapsto \underline{a \wedge b}
\end{align*}   
is biadditive, altrnating, and satisfies the desired universal property by the universal property of $\wedge^2$ for abelian groups.

For more general finite \'{E}tale group schemes $G / S,$ the desired $\wedge^2 G$ may be constructed by descent.  Let $\{ U_{\bullet} \rightarrow S \}$ be an \'{E}tale cover trivializing the finite \'{E}tale group scheme $G.$  The above already constructs $\iota_{U_1}: G_{U_1} \times_{U_1} G_{U_1} \rightarrow \wedge^2 G_{U_1}$ and $\iota_{U_2}: G_{U_2} \times_{U_2} G_{U_2} \rightarrow \wedge^2 G_{U_2}.$  Then $\left( \wedge^2 G_{U_1} \right)_{U_1 \times_S U_2}$ and $\left( \wedge^2 G_{U_2} \right)_{U_1 \times_S U_2}$ both satisfy the universal property defining $\wedge^2 G_{U_1 \times_S U_2}.$  Thus, there is a unique isomorphism $\iota_{U_1,U_2}: \left( \wedge^2 G_{U_1} \right)_{U_1 \times_S U_2}$ and $\left( \wedge^2 G_{U_2} \right)_{U_1 \times_S U_2}$ commuting with the struction morphisms $(\iota_{U_1})_{U_2}$ and $(\iota_{U_2} )_{U_1}.$  A second application of the universal property shows that these isomorphisms satisfy the cocycle condition on triple overlaps.  By \'{E}tale descent, $\{\iota_{U_{\bullet}}: G_{U_\bullet} \times_{U_\bullet} G_{U_{\bullet}} \rightarrow \wedge^2 G_{U_{\bullet}}  \}$ descends to a biadditive, alternating morphism $\iota: G \times_S G \rightarrow \wedge^2 G.$  

Let $f: G \times_S G \rightarrow H$ be a biadditive, alternating map.  By the universal property, every $f_{U_\bullet} : G_{U_\bullet} \times_{U_\bullet} G_{U_\bullet} \rightarrow H_{U_\bullet}$ factors uniquely through $\wedge^2 G_{U_{\bullet}} \xrightarrow{\pi_{\bullet}} H_{U_{\bullet}}.$  By the universal property of $\wedge^2,$ the morphisms $\pi_{\bullet}$ must agree on double overlaps: $\iota_{U_1,U_2} \circ (\pi_1)_{U_1 \times_S U_2} = (\pi_2)_{U_1 \times_S U_2}.$  By \'{E}tale descent for morphisms, $\pi_{\bullet}$ descends uniquely to a morphism $\pi: \wedge^2 G \rightarrow H$ satisfying $\pi \circ \iota = f.$     
\end{proof}

A completely analogous argument allows one to make any tensorial construction for finite \'{E}tale group schemes.  The key point: universal properties from linear algebra induce descent data that allow one to \'{E}tale-localize the construction to the case of constant group schemes, for which the construction is simple.  We single out the following special case for later use:

\begin{prop} \label{homEtalegroupschemes}
Let $G_1 / S$ and $G_2 / S$ be finite commutative \'{E}tale group schemes.  There exists a finite commutative \'{E}tale group scheme $\Hom(G_1,G_2) / S$ equipped with a morphism $e: G_1 \times_S \Hom(G_1,G_2) \rightarrow G_2$ satisfying the following universal property: 
\item[(a)]
$e$ is biadditive, i.e. for all $S$-schemes $T$ and all $x,y \in G_1(T)$ and $\alpha,\beta \in \Hom(G_1,G_2),$ 
$$e(x + y, \alpha) = e(x,\phi) + e(y,\alpha) \text{ and } e(x, \alpha + \beta) = e(x,\alpha) + e(x,\beta).$$

\item[(b)]
The morphism $e$ is universal with respect to the property (a):  if $H/S$ is a commutative group scheme and $f: G_1 \times_S H \rightarrow G_2$ is biadditive, there is a unique $S$-group scheme morphism $\pi: \Hom(G_1,G_2) \rightarrow H$ for which $e = f \circ (1,\pi).$ 

Furthermore, $\Hom(G_1,G_2)$ represents the functor on $S$-schemes
$$T \mapsto \mathrm{Hom}_{T\text{-group schemes}}((G_1)_T, (G_2)_T).$$ 
\end{prop}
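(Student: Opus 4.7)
The plan is to mirror the proof of Proposition \ref{wedge2Etalegroupschemes}: handle the case of constant group schemes directly and then glue via \'{e}tale descent. First, for finite abelian groups $A_1,A_2$, I would define $\Hom(\underline{A_1}_S, \underline{A_2}_S) := \underline{\Hom_{\mathrm{Ab}}(A_1,A_2)}_S$ with evaluation morphism determined on constant sections by $e(\underline{a},\underline{\phi}) = \underline{\phi(a)}$. Biadditivity is immediate, and the universal property for biadditive morphisms $f:\underline{A_1}_S \times_S H \rightarrow \underline{A_2}_S$ reduces, after evaluating at constant $a \in A_1$, to giving, for each $a$, a group scheme morphism $H \rightarrow \underline{A_2}_S$ additive in $a$ -- which is precisely the universal property of $\Hom_{\mathrm{Ab}}(A_1,A_2)$ in the abelian group setting. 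Representability in the constant case: a section of $\underline{\Hom_{\mathrm{Ab}}(A_1,A_2)}_S$ over connected $T$ is just an element of $\Hom_{\mathrm{Ab}}(A_1,A_2)$, matching a $T$-group scheme morphism $(\underline{A_1})_T \rightarrow (\underline{A_2})_T$.

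Next, I would pick an \'{e}tale cover $\{U_\bullet \rightarrow S\}$ trivializing both $G_1$ and $G_2$ as constant group schemes. On each $U_i$, the constant case yields $\Hom(G_1,G_2)|_{U_i}$ with its evaluation morphism $e_{U_i}$. On double overlaps $U_i \times_S U_j$, both $\left(\Hom(G_1,G_2)|_{U_i}\right)_{U_i \times_S U_j}$ and $\left(\Hom(G_1,G_2)|_{U_j}\right)_{U_i \times_S U_j}$ satisfy the universal property defining an internal $\Hom$ of $(G_1)_{U_i \times_S U_j}$ and $(G_2)_{U_i \times_S U_j}$, so there is a unique comparison isomorphism compatible with the evaluation morphisms. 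Uniqueness in the universal property forces the cocycle condition on triple overlaps. Invoking \'{e}tale descent for finite \'{e}tale schemes (and for morphisms) glues the local data into a global $\Hom(G_1,G_2)$ together with a biadditive $e: G_1 \times_S \Hom(G_1,G_2) \rightarrow G_2$.

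To verify the global universal property, given biadditive $f: G_1 \times_S H \rightarrow G_2$, the constant case on each $U_i$ yields a unique morphism $\pi_{U_i}$ with $e_{U_i} \circ (1,\pi_{U_i}) = f_{U_i}$; uniqueness on double overlaps, plus descent for morphisms, assembles these into the desired $\pi: H \rightarrow \Hom(G_1,G_2)$. For representability of the Hom functor, given $\phi: (G_1)_T \rightarrow (G_2)_T$, after pulling back to a cover $V \rightarrow T$ trivializing both sides $\phi$ becomes a morphism of constant group schemes over $V$, hence a $V$-section of $\Hom(G_1,G_2)_V$; compatibility with the descent data for $(G_1)_T,(G_2)_T$ produces a descent datum for this section, giving a $T$-point of $\Hom(G_1,G_2)$. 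The inverse map simply composes with $e$.

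The main obstacle I anticipate is bookkeeping rather than novelty: namely, verifying that the comparison isomorphisms on double overlaps intertwine evaluation morphisms correctly and that the resulting descent data are compatible with the universal property after gluing. This is essentially the same descent bookkeeping carried out in Proposition \ref{wedge2Etalegroupschemes}, so I would import those techniques verbatim. A minor subtlety worth isolating is that the universal property is stated for an arbitrary commutative group scheme $H/S$ (not necessarily finite or \'{e}tale), so the unique factorization $\pi:H \rightarrow \Hom(G_1,G_2)$ must be checked to descend even when $H$ is not itself split by the chosen cover -- but this only uses that $\Hom(G_1,G_2)$ has been constructed globally, and \'{e}tale descent for morphisms into a fixed $S$-scheme handles the rest.
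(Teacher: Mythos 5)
Your proposal is correct and follows exactly the route the paper takes: the paper gives no separate proof for this proposition, stating only that it follows by the "completely analogous argument" to Proposition \ref{wedge2Etalegroupschemes} — i.e., construct the object for constant group schemes, use the universal property to produce canonical comparison isomorphisms satisfying the cocycle condition, and conclude by \'{e}tale descent for schemes and morphisms, which is precisely your argument (including the descent verification of representability). Note only that you correctly use the direction $\pi: H \rightarrow \Hom(G_1,G_2)$ with $f = e \circ (1,\pi)$, which is the intended reading of the statement (the paper's displayed version has the arrow reversed, evidently a typo).
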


\subsection{Generalities on moduli spaces} \label{modulispaces}    

Let $V / S$ be a family of principally polarized, $g$-dimensional abelian varieties over $S.$  Let $\mathcal{A} \rightarrow V$ denote the universal family.  Suppose that $\ell$ is invertible on $S.$  Let $G / S$ be a finite \'{E}tale group scheme.  We claim the moduli problem

$$V_G(T) = \{  A \in V(T), T\text{-group morphism } \phi: A[\ell^n] \twoheadrightarrow G_T \} \text{ for all } T / S$$

is representable. To see this, consider the finite \'{E}tale group $S$-scheme $\Hom(\mathcal{A}[\ell^n], G_V)$, and consider the subscheme  $Y$ of 
$\mathcal{A}[\ell^n]\times \Hom(\mathcal{A}[\ell^n], G_V)$ mapping to the origin in $G_V$.   $Y$ is a finite \'{E}tale group scheme over $\Hom(\mathcal{A}[\ell^n], G_V)$, so $V_G$ is just 
the subscheme over which $Y$ is of degree $\ell^{2ng}/|G|,$ which is a union of connected components of $\Hom(\mathcal{A}[\ell^n], G_V)$. 
%\textcolor{red}{Cartier duality should interchange kernels and cokernels, and $\mathcal{A}[\ell^n]^\vee$ and $G^\vee$ are both \'{E}tale (because $\ell$ is invertible on $S$).  I'm sure we can argue that $\phi \mapsto \phi^\vee$ is an algebraic map from $\Hom(\mathcal{A}[\ell^n],G_V)$ to $\Hom(G_V^\vee, \mathcal{A}[\ell^n]^\vee).$  Then $\ker \Phi^\vee = 0$ is equivalent to $\coker \Phi = 0.$  So let 
%$$W_G = \mathrm{CartierDuality}^{-1} \{ \phi \in \Hom(G^\vee, \mathcal{A}[\ell^n]^\vee): \ker \phi = 0 \}.$$
%we've just shown the thing we're taking preimage of is representable and finite \'{E}tale over $V,$ so assuming Cartier duality is algebraic, $W_G$ is finite \'{E}tale over $V$ and represents $V_G.$} 

The morphism $V_G \rightarrow V$ is thus finite \'{E}tale. 

%$$\{  \phi \in \Hom(\mathcal{A}[\ell^n], G_V): \coker \phi = 0 \}.$$

\subsubsection{The Weil pairing morphism to $\wedge^2 G \otimes \mu_{\ell^n}^{-1}$}
Let $T$ be an $S$-scheme.  Let $A / T$ be a principally polarized abelian $T$-scheme.  Let $\ell$ be invertible on $S.$  We may naturally regard the Weil pairing $w_{\ell^n}(A): A[\ell^n] \times_T A[\ell^n] \rightarrow \mu_{\ell^n} / T$ as an element of $\Hom\left(\wedge^2 A[\ell^n],\mu_{\ell^n} \right)(T).$ 

Finite \'{E}tale group schemes locally isomorphic to $\underline{\mathbb{Z} / \ell^n}_S$ form an abelian group under tensor product with identity $\underline{\mathbb{Z} / \ell^n}_S$ and inverse $H^{-1} := \Hom(H,\underline{\mathbb{Z} / \ell^n}_S).$ We let $H^m := H^{\otimes m}$ and $H^{-n} := (H^{-1})^{\otimes n}.$

Consider the multilinear map  
$A[\ell^n]^4\ra \mu_{\ell^n}\otimes\mu_{\ell^n}$ given on sections by 
$$(a,b,c,d)\ra w_{\ell^n}(a,c)\otimes w_{\ell^n}(b,d) \cdot \left[ w_{\ell^n}(b,c)\otimes w_{\ell^n}(a,d) \right]^{-1}.$$
By the universal property for $\wedge^2,$ this induces a pairing $\wedge^2 A[\ell^n]\times \wedge^2 A[\ell^n] \ra\mu_{\ell^n}^2$. One can check on the level of points that this pairing is perfect. 
Thus we can naturally identify $\wedge^2 A[\ell^n]$ with the Cartier dual of $\wedge^2A[\ell^n]\otimes\mu_{\ell^n}^{-1}.$  It follows that we may naturally regard the Weil pairing $w_{\ell^n}$ as an element of 
$\left( \wedge^2A[\ell^n]\otimes\mu_{\ell^n}^{-1} \right)(T).$ From now on we write $H(m)$ for $H\otimes\mu_{\ell^n}^{-m}$.

\begin{lem} \label{surjectivegeometricfibers}
Let $V/ S$ be a family of $g$-dimensional principally polarized abelian varieties with universal abelian scheme $\mathcal{A} \rightarrow V.$  The morphism
\begin{align*}
V_G &\xrightarrow{\pi} (\wedge^2 G)(1) \\
(A,\phi) &\mapsto \phi(w_{\ell^n}(A))
\end{align*}
is functorial and hence algebraic.  If $g \geq c(G),$ where the constant $c(G)$ depends only on $G,$ the morphism $\pi$ is surjective on geometric points.  
\end{lem}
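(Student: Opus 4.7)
The plan is to verify functoriality directly and then reduce the surjectivity statement on geometric points to the Witt-type extension theorem already invoked in the proof of Theorem \ref{momentswithsymplecticstructure}.

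For functoriality, I would observe that given a $T$-point $(A,\phi)$ of $V_G$, both the Weil pairing $w_{\ell^n}(A) \in (\wedge^2 A[\ell^n])(1)(T)$ and the induced map $\wedge^2 \phi (1)$ (from functoriality of the constructions in Proposition \ref{wedge2Etalegroupschemes} and of the Tate twist) are natural in $T$. Composing, the assignment $(A,\phi) \mapsto \phi_*(w_{\ell^n}(A)) \in (\wedge^2 G)(1)(T)$ commutes with base change $T' \to T$, and since the target is representable this defines $\pi$ as a morphism of $S$-schemes.

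For surjectivity on geometric points, I would fix a geometric point $\bar{s} \to S$, an element $\omega_G \in (\wedge^2 G)(1)(\bar{s})$, and choose any $\bar{v} \in V(\bar{s})$ with corresponding principally polarized abelian variety $A := \mathcal{A}_{\bar{v}}$ of dimension $g$. After trivializing $\mu_{\ell^n}(\bar{s})$, the Weil pairing makes $A[\ell^n](\bar{s})$ into a free symplectic $\mathbb{Z}/\ell^n$-module of rank $2g$, and $\omega_G$ becomes an element of $\wedge^2 G(\bar{s})$. The remaining task is purely linear-algebraic: produce a surjection $\phi: A[\ell^n](\bar{s}) \twoheadrightarrow G(\bar{s})$ satisfying $\phi_*(w_{\ell^n}(A)) = \omega_G$. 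This is precisely the Witt extension statement \cite[Theorem 2.14]{Michael} already used in the proof of Theorem \ref{momentswithsymplecticstructure}, which furnishes such a $\phi$ once $g$ exceeds a constant $g(G,\omega_G)$. Since $\wedge^2 G(\bar{s})$ is finite, setting $c(G) := \max_{\omega_G} g(G,\omega_G)$ yields a constant depending only on $G$ as required.

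The main technical content is already subsumed in Theorem \ref{momentswithsymplecticstructure}; this lemma is essentially the geometric repackaging of the symplectic Witt extension statement. The only care required is in the bookkeeping for the Tate twist and in verifying that the bound $c(G)$ can be made uniform over the finite set of possible targets $\omega_G$, neither of which introduces substantive difficulty.
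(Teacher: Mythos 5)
Your proposal is correct and follows essentially the same route as the paper: trivialize all the group schemes and $\mu_{\ell^n}$ over the algebraically closed field of a geometric point, so that surjectivity of $\pi$ reduces to the purely linear-algebraic Witt extension statement of \cite[Proposition 2.14]{Michael} for the (nondegenerate) Weil pairing on $A[\ell^n]$. Your extra step of taking the maximum of the bound over the finitely many possible $\omega_G$ is just a slightly more careful bookkeeping of the same constant $c(G)$.
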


\begin{proof}
%\textcolor{red}{Any subtlety to the functoriality check?}

Let $y \in (\wedge^2 G)(1)$ be a geometric point.  Let $A \in V(S)$ be an arbitrary abelian scheme.  Over the algebraically closed residue field $k(y),$ the group schemes $A[\ell^n]_{k(y)},\mu_{\ell^n}$ and 
$G_{k(y)}$ become constant, isomorphic to $\underline{\left(\ZZ / \ell^n \right)^{2g}},\underline{\ZZ / \ell^n }$ and $B = G(k(y))$ respectively.  

The Weil pairing 
$\omega\in\left(\wedge^2 A[\ell^n](1)\right)(k(y))$ is 
non-degenerate.  Surjections $A[\ell^n]_{k(y)} \rightarrow G_{k(y)}$ are equivalent to surjections of finite abelian groups $\left(\mathbb{Z} / \ell^n \right)^{2g} = A[\ell^n](k(y)) 
\twoheadrightarrow G(k(y)) = B.$  

Let $\omega_B \in \wedge^2 B$ correspond\footnote{This is a well-defined correspondence upon fixing a generator of $\mu_{\ell^n}$} to the geometric point $y \in \wedge^2 G(1).$  By \cite[Proposition 2.14]{Michael}, there is some constant $c(G)$ such that if $g \geq c(G),$ there exists some surjection $\phi: (\ZZ / \ell^n)^{2g} \twoheadrightarrow A$ for which $\phi(\omega) = \omega_A.$  The result follows.
\end{proof}

\subsection{Geometric monodromy and connected components}

\begin{prop} \label{connectedgeometricfibers}
Let $k$ be a field.  Let $\mathcal{A} \rightarrow V / k$ be a family of $g$-dimensional principally polarized abelian varieties with universal Weil pairing $\omega.$  Let $G / k$ be a finite \'{E}tale commutative group scheme.  Suppose that for every geometric point $\overline{z} \in V,$ the action of the geometric monodromy group $\pi_1(V,\overline{z}) = \mathrm{Gal}(k(\overline{\eta}) / k(\eta))$ on $\mathcal{A}[\ell^n](k(\overline{z})) \cong \left(\ZZ / \ell^n \right)^{2g}$ has image equal equal to the full symplectic group $\Sp \left( \mathcal{A}[\ell^n] (k(\overline{z})), \omega_{k(\overline{z})} \right).$  There is a constant $c(G)$ such that:  if $g \geq c(G),$ 
\begin{itemize}
\item
$\pi: V_G \rightarrow (\wedge^2 G)(1)$ is surjective on geometric points.

\item
For every geometric point $y \in (\wedge^2 G)(1),$ the fiber $\pi^{-1}(y)$ is connected.
\end{itemize}
\end{prop}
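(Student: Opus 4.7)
The plan is to treat the two bullets separately. Surjectivity on geometric points follows directly from Lemma \ref{surjectivegeometricfibers}: for $g$ larger than some constant $c_1(G)$, every $\omega_y \in \wedge^2 G(1)(\bar{k})$ can be realized as $\phi(\omega_A)$ for some surjection $\phi: A[\ell^n] \twoheadrightarrow G$ on any chosen $A \in V(\bar{k})$. So the real content is connectedness of the geometric fibers, and the plan below is aimed at that.

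First, I would fix a geometric point $y$ of $(\wedge^2 G)(1)$ and a geometric base point $\bar{z}$ of $V_{\bar{k}}$ (working one connected component of $V_{\bar{k}}$ at a time if needed). After base change to $\bar{k}$, the group scheme $G_{\bar{k}}$ becomes the constant scheme attached to the finite abelian group $B := G(\bar{k})$, with $y$ corresponding to some element $\omega_y \in \wedge^2 B(1)(\bar{k})$. The scheme $\pi^{-1}(y)_{\bar{k}}$ is an open-and-closed sub-cover of the finite étale cover $V_{G,\bar{k}} \to V_{\bar{k}}$, parameterizing surjections $\phi: \mathcal{A}[\ell^n] \twoheadrightarrow B$ satisfying $\phi(\omega) = \omega_y$. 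Its fiber over $\bar{z}$ is the finite set
\[
F_y = \{ \phi: \mathcal{A}[\ell^n]_{\bar{z}} \twoheadrightarrow B : \phi(\omega_{\bar{z}}) = \omega_y \}.
\]

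Next, I would invoke the Galois correspondence for finite étale covers: connectedness of $\pi^{-1}(y)_{\bar{k}}$ over the connected base $V_{\bar{k}}$ is equivalent to $\pi_1(V_{\bar{k}}, \bar{z})$ acting transitively on $F_y$. This action factors through the action of $\pi_1$ on $\mathcal{A}[\ell^n]_{\bar{z}}$, and by the geometric monodromy hypothesis the image of this action is the full symplectic group $\Sp(\mathcal{A}[\ell^n]_{\bar{z}}, \omega_{\bar{z}})$. So the desired transitivity reduces to the purely algebraic statement that $\Sp_{2g}(\ZZ/\ell^n)$ acts transitively on $F_y$.

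Finally, this last transitivity is precisely the symplectic Witt extension theorem \cite[Theorem 2.14]{Michael} already used in the proof of Theorem \ref{momentswithsymplecticstructure}: for $g$ exceeding a constant $c_2(G, \omega_y)$ depending only on $B$ and $\omega_y$, every fiber of the map $\phi \mapsto \phi(\omega)$ from surjections to symplectic forms on $B$ is a single $\Sp$-orbit. Taking $c(G)$ to be the maximum of $c_1(G)$ and $c_2(G, \omega_y)$ over the finitely many possible $\omega_y \in \wedge^2 B(1)(\bar{k})$ yields both conclusions simultaneously. I do not expect any serious obstacle here: Witt's extension theorem supplies the algebraic transitivity and the monodromy hypothesis is assumed, so the main task is the formal translation between connected components of the étale cover and orbits of the geometric fundamental group on its fiber, together with bookkeeping over the finitely many geometric components of $V_{\bar{k}}$ and the finitely many target forms $\omega_y$.
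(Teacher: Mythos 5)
Your proposal is correct and follows essentially the same route as the paper's own proof: surjectivity via Lemma \ref{surjectivegeometricfibers}, identification of the fiber of $V_G \to V$ over a geometric point with the set of surjections $(\ZZ/\ell^n)^{2g} \twoheadrightarrow B$ sending $\omega$ to $\omega_y$, and then transitivity of the monodromy action deduced from the full-$\Sp$ hypothesis together with the single-orbit statement of Witt's extension theorem \cite[Theorem 2.14]{Michael}. The only cosmetic difference is your explicit maximum of constants over the finitely many forms $\omega_y$, which the paper subsumes into the uniform constant $c(B)$.
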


\begin{proof}
Let $\omega_0 \in \wedge^2 (\ZZ / \ell^n)^{2g}$ be non-degnerate. Let $A$ be a finite abelian $\ell$-group.  Consider the map
\begin{align*}
\Phi: \text{Surjections}( (\ZZ / \ell^n)^{2g}, A) &\rightarrow \wedge^2 A \\
T &\mapsto T(\omega_0)
\end{align*}
By \cite[Proposition 2.14]{Michael}, there is a constant $c(A)$ such that if $g \geq c(A), \Phi$ is surjective and forms a single orbit under the symplectic group $\Sp((\ZZ / \ell^n)^{2g}, \omega_0).$

Set $c(G) = c( B)$ where $B = G(\overline{k})$ for any algebraic closure $\overline{k} / k.$  By Lemma \ref{surjectivegeometricfibers}, the map $\pi$ is surjective on geometric points.  Let $\overline{y} \in \pi^{-1}(y)$ be a geometric point.  Let $\overline{z} = \pi_G(\overline{y}) \in V,$ where $\pi_G: V_G \rightarrow V$ is the forgetful map.   

Note that the fiber $\pi_G^{-1}(\overline{z})$ equals  
$$\mathrm{Surjections}(\mathcal{A}[\ell^n](k(\overline{z})), B) \cong \mathrm{Surjections}( (\ZZ / \ell^n)^{2g}, B);$$
the symbol $\cong$ means that there is an isomorphism $\mathcal{A}[\ell^n](k(\overline{z})) \rightarrow (\ZZ / \ell^n)^{2g}$ which is equivariant for the action of $\Sp( \mathcal{A}[\ell^n](k(\overline{z})), \omega_{k(\overline{z})} )$ on the left and of $\mathrm{Sp}((\mathbb{Z} / \ell^n)^{2g}, \omega_0)$ on the right.

The points of $\pi_G^{-1}(\overline{z})$ lying over $y,$ corresponding to $\omega_B \in \wedge^2 B,$ can be identified with 
$$\{T \in \mathrm{Surjections}((\ZZ / \ell^n)^{2g}, B): T(\omega_0) = \omega_B \}$$
By the above remarks, our assumption that the image of $\pi_1(V,\overline{z})$ in \newline$\Sp( \mathcal{A}[\ell^n](k(\overline{z})), \omega_{k(\overline{z})} )$ is surjective implies that $\pi_1(V,\overline{z})$ acts transitively on $\pi^{-1}(\overline{z}).$  It follows that $V_G$ is geometrically connected. 
\end{proof}

\begin{cor}
Let $k$ be a finite field of characteristic $p.$  Let $\ell' \neq \ell,p$ be a prime. Let $G / k$ be a finite \'{E}tale group scheme.  Same notation and hypotheses as in Proposition \ref{connectedgeometricfibers}.  Let $c(G)$ be the constant from Proposition \ref{connectedgeometricfibers} and assume that $g \geq c(G).$  Let $\overline{k} / k$ be an algebraic closure.  The map $\pi:V_G \rightarrow (\wedge^2 G)(1)$ induces a $\Gal(\overline{k}/k)$-equivariant isomorphism
$$H_{\mathrm{et}}^0((\wedge^2 G(1))_{\overline{k}}, \Q_{\ell'})  \xrightarrow{\pi^\ast} H_{\mathrm{et}}^0((V_G)_{\overline{k}}, \Q_{\ell'}).$$
In particular, 
$$\mathrm{tr} \left( \frob_{\overline{k} / k} | H_{\mathrm{et}}^0((V_G)_{\overline{k}}, \Q_{\ell'})  \right) = \# \left( (\wedge^2 G)(1) \right)(k).$$
\end{cor}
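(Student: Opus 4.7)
The plan is to interpret $H^0_{\mathrm{et}}(-,\Q_{\ell'})$ as the space of locally constant $\Q_{\ell'}$-valued functions on geometric connected components, and then to read off the corollary directly from the two conclusions of Proposition \ref{connectedgeometricfibers}.

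First, I recall the standard fact that for a Noetherian scheme $X$ over an algebraically closed field, $H^0_{\mathrm{et}}(X,\Q_{\ell'})$ has a canonical basis indexed by the connected components of $X$, with the Galois action (when $X$ is defined over a subfield) induced by permutation of components. Since $(\wedge^2 G)(1)$ is a finite \'{e}tale group scheme over $k$ by Proposition \ref{wedge2Etalegroupschemes} and the construction in the preceding subsection, its base change to $\overline{k}$ is a disjoint union of copies of $\mathrm{Spec}\,\overline{k}$, one for each element of $(\wedge^2 G)(1)(\overline{k})$; in particular, its connected components are its $\overline{k}$-points.

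Second, I apply Proposition \ref{connectedgeometricfibers}: under the standing hypothesis $g \geq c(G)$, the morphism $\pi: V_G \to (\wedge^2 G)(1)$ is surjective on geometric points, and every geometric fiber $\pi^{-1}(\overline{y})$ is connected. Combining these with the description of $(\wedge^2 G)(1)_{\overline{k}}$ above, the connected components of $(V_G)_{\overline{k}}$ are exactly the fibers $\pi^{-1}(\overline{y})$ as $\overline{y}$ ranges over $(\wedge^2 G)(1)(\overline{k})$. Hence $\pi$ induces a bijection $\pi_0((V_G)_{\overline{k}}) \xrightarrow{\sim} (\wedge^2 G)(1)(\overline{k}) = \pi_0((\wedge^2 G)(1)_{\overline{k}})$, and this bijection is $\mathrm{Gal}(\overline{k}/k)$-equivariant because $\pi$ is defined over $k$. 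Dualizing, $\pi^\ast$ gives the claimed Galois-equivariant isomorphism on $H^0_{\mathrm{et}}$.

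Finally, the trace identity is immediate: $H^0_{\mathrm{et}}((\wedge^2 G)(1)_{\overline{k}}, \Q_{\ell'})$ is the permutation representation of $\mathrm{Gal}(\overline{k}/k)$ on $(\wedge^2 G)(1)(\overline{k})$, and the trace of $\frob_{\overline{k}/k}$ on such a permutation representation counts the fixed points, namely $\#(\wedge^2 G)(1)(k)$. There is no real obstacle here; the content of the corollary is entirely packaged in Proposition \ref{connectedgeometricfibers}, and the remaining step is only the standard identification of $H^0$ with the space of functions on geometric components and the corresponding Lefschetz-type fixed-point count on $H^0$.
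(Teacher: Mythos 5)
Your proposal is correct and follows essentially the same route as the paper's own proof: it deduces the isomorphism on $H^0_{\mathrm{et}}$ from the surjectivity and fiberwise connectedness in Proposition \ref{connectedgeometricfibers}, with Galois equivariance coming from $\pi$ being defined over $k$, and then reads off the trace as a fixed-point count on the permutation representation. You simply spell out the standard identification of $H^0$ with functions on geometric components, which the paper leaves implicit.
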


\begin{proof}
$\Gal(\overline{k} / k)$-equivariance follows becuase $V_G \rightarrow (\wedge^2 G)(1)$ is defined over $k.$  The map $\pi^\ast$ induces an isomorphism because $\pi^{-1}(y)$ is connected for every geometric point $y \in \wedge^2 G,$ by Proposition \ref{connectedgeometricfibers}.  
\end{proof}

\subsection{Comparison between moduli spaces of abelian varieties with level structure and EVW moduli spaces of covers}

Let $S$ be any base.  Let $\Conf_n $ be the moduli space of $n$ distinct unlabelled points in $\mathbb{A}^1.$ Let $\mathcal{C} \rightarrow \Conf_n$ be the associated family of hyperelliptic curves.  Let $\mathcal{A} \rightarrow \Conf_n$ denote the relative Jacobian of $\mathcal{C} / \Conf_n.$  There is an associated Torelli map $\mathcal{J}: \mathcal{C} / \Conf_n \rightarrow \mathcal{A} / \Conf_n.$  

Let $B$ be a finite abelian group of odd order.  Let $ \underline{B}_{\Conf_n}$ be the associated constant group scheme.  Let
$$\pi_{\underline{B}}: \Conf_{n,\underline{B}} \rightarrow \Conf_n$$
be the finite \'{E}tale cover described in \S \ref{modulispaces}.  Let $\phi: \mathcal{D}[\ell^n] / \Conf_n \twoheadrightarrow \underline{B} / \Conf_n$ be the associated universal quotient.  There is an associated finite \'{E}tale cover over $\Conf_{n,\underline{B}}$

$$\mathcal{D}[\ell^n] / \ker \phi  \rightarrow \mathcal{D} / \mathcal{D}[\ell^n] \cong \mathcal{D} = \mathcal{A}_{\Conf_{n,\underline{B}}},$$

where the isomorphism $\cong$ from the second map is the inverse of the projection isomorphism $\mathcal{D} \xrightarrow{\sim} \mathcal{D} / \mathcal{D}[\ell^n].$  Pulling back this finite 
\'{E}tale cover by the Torelli map $\mathcal{J}_{\Conf_{n,\underline{B}}}: \mathcal{C}_{\Conf_{n,\underline{B}}} \rightarrow \mathcal{A}_{\Conf_{n,\underline{B}}} $ defines a finite \'{E}tale 
cover $\mathcal{C}'\rightarrow \mathcal{C}_{\Conf_{n,\underline{B}}}$ of  $\Conf_{n,\underline{B}}$ with abelian Galois group $B.$  This finite \'{E}tale cover defines a morphism

$$\Phi: \Conf_{n,\underline{B}}  \rightarrow \mathrm{Hn}_{B \rtimes \langle \pm 1\rangle,n}^c / \Conf_n,$$

where $\mathrm{Hn}_{B \rtimes \langle \pm 1\rangle,n}^c$ denotes the moduli space of $B \rtimes \langle \pm 1 \rangle$-covers of $\mathbb{P}^1$ ramified at $\infty$ and having monodromy in the conjugacy class of involutions at $n$-finite punctures.  This is the moduli space considered by \cite{EVW}.  We refer the reader to \cite{EVW} and \cite{RW} for details on the algebraic construction of $\mathrm{Hn}_{B \rtimes \langle \pm 1\rangle,n}^c.$  In particular, we emphasize that both  $\Conf_{n,\underline{B}} / \Conf_n$ and  $\mathrm{Hn}_{B \rtimes \langle \pm 1\rangle,n}^c / \Conf_n$ are finite \'{E}tale.

\begin{prop}
Let $S = \overline{\mathbb{F}}_q$ be an algebraic closure of the finite field $\mathbb{F}_q.$  The morphism $\Phi: \Conf_{n,\underline{B}} / \Conf_n \rightarrow \mathrm{Hn}_{B \rtimes \langle \pm 1\rangle,n}^c / \Conf_n$ described above is an isomorphism.
\end{prop}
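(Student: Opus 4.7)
The plan is to prove $\Phi$ is an isomorphism by verifying that it induces a bijection on every geometric fiber over $\Conf_n$. Both $\Conf_{n,\underline{B}}$ and $\mathrm{Hn}^c_{B \rtimes \langle \pm 1 \rangle, n}$ are finite \'{e}tale over $\Conf_n$ and $\Phi$ is a $\Conf_n$-morphism; a morphism between finite \'{e}tale schemes over a common base that is bijective on all geometric fibers is automatically an isomorphism, so this reduction suffices.

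Fix a geometric point $z \in \Conf_n(\overline{\F}_q)$, corresponding to a configuration $\{x_1,\ldots,x_n\} \subset \mathbb{A}^1(\overline{\F}_q)$ and the associated hyperelliptic curve $C$ with Jacobian $J$. By construction the fiber of $\Conf_{n,\underline{B}}$ over $z$ is the set of surjections $\phi: J[\ell^n](\overline{\F}_q) \twoheadrightarrow B$, while the fiber of $\mathrm{Hn}^c$ over $z$ is the set of isomorphism classes of connected $B \rtimes \langle \pm 1 \rangle$-covers $\tilde{C} \to \PP^1_{\overline{\F}_q}$ with inertia in the involution conjugacy class at each $x_i$. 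I would construct an inverse to $\Phi$ as follows: given such a cover $\tilde{C} \to \PP^1$, the intermediate quotient $\tilde{C}/B \to \PP^1$ is a degree-$2$ cover whose branch locus lies in $\{x_1,\ldots,x_n,\infty\}$, and is therefore canonically isomorphic to $C \to \PP^1$. Consequently $\tilde{C} \to C$ is an \'{e}tale $B$-cover, classified by $\Hom(\pi_1^{\mathrm{et}}(C),B) = \Hom(J[\ell^n](\overline{\F}_q),B)$, with connected covers corresponding to surjections.

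The subtle step, where the hypothesis that $|B|$ is odd is essential, is that \'{e}tale $B$-covers of $C$ biject with connected $B \rtimes \langle \pm 1 \rangle$-covers of $\PP^1$ having the specified ramification. Since the hyperelliptic involution $\sigma$ acts as $[-1]$ on $J$, any $\phi: J[\ell^n] \twoheadrightarrow B$ satisfies $\sigma^\ast \phi = -\phi$, so $\sigma^\ast \tilde{C}$ is canonically isomorphic to $\tilde{C}$ with its $B$-action inverted; this furnishes a lift $\tilde{\sigma}$ of $\sigma$ anticommuting with $B$. Lifts form a torsor under $B$, but $\tilde{\sigma}^2 \in B$ is independent of the choice of lift and has order dividing $2$; since $|B|$ is odd, $\tilde{\sigma}^2 = 0$ and every lift is automatically an involution. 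Moreover, conjugation by $c \in B$ sends $\tilde{\sigma}$ to $(2c) \tilde{\sigma}$, and since $2:B \to B$ is a bijection, any two lifts are related by an inner automorphism of $B \rtimes \langle \pm 1 \rangle$ coming from $B$ and therefore define isomorphic $B \rtimes \langle \pm 1 \rangle$-covers.

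To verify that this inverse construction agrees with $\Phi$, I would unwind the Torelli-pullback definition: the canonical $B$-cover $\mathcal{A}[\ell^n]/\ker(\phi) \to \mathcal{A}$ carries the involution induced by $[-1]: \mathcal{A} \to \mathcal{A}$, and this pulls back along the Torelli map to one of the canonical lifts of $\sigma$ to $\tilde{C}$ described above. The main obstacle I anticipate is carefully matching the moduli-theoretic conventions for isomorphism classes in the Hurwitz space $\mathrm{Hn}^c$ (specifically whether $G$-cover structures are considered rigidly or modulo inner automorphisms of $G$) with the $B$-torsor of lifts $\tilde{\sigma}$; once these are reconciled, the bijection on geometric fibers is established and the proposition follows.
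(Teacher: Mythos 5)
Your reduction is exactly the paper's: since both $\Conf_{n,\underline{B}}$ and $\mathrm{Hn}^c_{B\rtimes\langle\pm1\rangle,n}$ are finite \'etale over $\Conf_n$, the $\Conf_n$-morphism $\Phi$ is automatically finite \'etale, and a bijection on $\overline{\F}_q$-points forces degree $1$, hence an isomorphism. Where you genuinely differ is in how that bijection is obtained: the paper gets it in one line by citing \cite[Proposition 8.7]{EVW}, whereas you re-derive the underlying correspondence by hand --- quotienting a $B\rtimes\langle\pm1\rangle$-cover by $B$ to recover the hyperelliptic curve, classifying the resulting \'etale $B$-cover by a surjection from $J[\ell^n]$ (note this identification $\Hom(\pi_1^{\mathrm{et}}(C),B)=\Hom(J[\ell^n](\overline{\F}_q),B)$ uses that $B$ has exponent dividing $\ell^n$, which holds in context), and conversely lifting the hyperelliptic involution. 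Your group-theoretic core is correct: a lift $\tilde\sigma$ anticommutes with $B$, $\tilde\sigma^2\in B$ is independent of the lift and satisfies $2\tilde\sigma^2=0$, hence vanishes since $|B|$ is odd, and all lifts are $B$-conjugate because doubling is bijective on $B$; this is precisely the classical dihedral-cover argument packaged inside EVW's proposition. What the citation buys is exactly the bookkeeping you flag at the end: EVW's conventions for $\mathrm{Hn}^c$ (behavior/rigidification at $\infty$, which depends on the parity of $n$, connectedness of the total $B\rtimes\langle\pm1\rangle$-cover versus surjectivity of $\phi$, and whether $G$-cover structures are taken rigidly or up to inner automorphisms from $B$) are what make the two fiberwise sets match on the nose, and one must also verify that your inverse really inverts the Torelli-pullback construction defining $\Phi$. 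So your proposal is a correct, more self-contained route to the same statement, conditional on completing those convention checks that the paper outsources to \cite{EVW}.
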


\begin{proof}
Because $\Conf_{n,\underline{B}} / \Conf_n$ and $\mathrm{Hn}_{B \rtimes \langle \pm 1\rangle,n}^c / \Conf_n$ are finite \'{E}tale, the morphism $\Phi$ is necessarily finite \'{E}tale.  By \cite[Proposition 8.7]{EVW}, $\Phi$ induces a bijection $\Conf_{n,\underline{B}}( \overline{\mathbb{F}}_q) \xrightarrow{\Phi} \mathrm{Hn}_{B \rtimes \langle \pm 1\rangle,n}^c( \overline{\mathbb{F}}_q).$  It follows that $\phi$ must have degree 1 and is thus an isomorphism.
\end{proof}

\begin{cor}\label{momentsare1general}
Assume now that $\ell$ is odd. Let $G$ be a finite \'{E}tale group scheme over $\F_q$ of order $\ell^n$,  and $\omega_G\in(\wedge^2 G)(1)(\F_q)$. 
For each $g$,  define $$\avg(G,\omega_G,g,q):=\frac{\#\{\phi\in\Sur(\Hom(\Pic^0(C)[\ell^n],G), \phi_*(\omega_{C,\ell^n})=\omega_G\}}{\#{\rm Conf}_g(\F_q)}$$
where $\omega_{C,\ell^n}$ is the weil-pairing.

Let $\delta^{\pm}(q,\omega_G)$ be the lower and upper limits of $\avg(G,\omega_G,g,q)$ as $g\ra\infty$. Then as $q\ra\infty$ and $n$ stays fixed, $\delta^{+}(q,\omega_G)$
 and $\delta^{-}(q,\omega_G)$ converge to 1.
\end{cor}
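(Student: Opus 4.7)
The plan is to realize the numerator of $\avg(G,\omega_G,g,q)$ as a count of $\F_q$-points on a component of the moduli space $V_G$, apply the Grothendieck-Lefschetz trace formula, and match main and error terms using the geometric connectedness result (Proposition \ref{connectedgeometricfibers}) together with the cohomology bounds of Ellenberg-Venkatesh-Westerland.

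First I would identify the numerator of $\avg(G,\omega_G,g,q)$ with $\#\pi^{-1}(\omega_G)(\F_q)$, where $\pi: V_G\to(\wedge^2 G)(1)$ is the Weil-pairing map of Lemma \ref{surjectivegeometricfibers} applied to the universal Jacobian $\mathcal{A}=\Jac(\mathcal{C})\to\Conf_g$: an $\F_q$-point of $\pi^{-1}(\omega_G)$ is by definition a pair $(C,\phi)$ with $C\in\Conf_g(\F_q)$ and $\phi\in\Sur(\Jac(C)[\ell^n],G)$ for which $\phi_*(\omega_{C,\ell^n})=\omega_G$, and summing over $C$ recovers the numerator.

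Next I apply Lefschetz to $\pi^{-1}(\omega_G)$, which is a smooth $\F_q$-variety of dimension $d := \dim \Conf_g$ (smoothness because $V_G\to\Conf_g$ is finite \'etale). The mod-$\ell^n$ monodromy of the hyperelliptic family is the full symplectic group (a classical fact due to A'Campo/Yu), so the hypothesis of Proposition \ref{connectedgeometricfibers} holds; for $g\geq c(G)$, the fiber $\pi^{-1}(\omega_G)$ is geometrically connected, so its top compactly-supported cohomology $H^{2d}_c$ is $\Q_{\ell'}(-d)$ and contributes exactly $q^d$ to the trace. For the lower cohomology, the Proposition just preceding the Corollary identifies $(V_G)_{\overline{\F_q}}$ with the EVW Hurwitz space $\mathrm{Hn}^c_{B\rtimes\langle\pm 1\rangle, n}$ where $B=G(\overline{\F_q})$; \cite{EVW} bound its Betti numbers by $A^i$ for some $A=A(G)$, and combined with Deligne's weight bound $|\alpha|\leq q^{i/2}$ on $H^i_c$-eigenvalues and Poincar\'e duality $b^c_i = b_{2d-i}$, the contribution of $i<2d$ is at most $q^d\sum_{j>0}A^j q^{-j/2} = O(q^{d-1/2})$ once $q>A^2$, with constants uniform in $g$.

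Since $\#\Conf_g(\F_q) = q^d + O(q^{d-1/2})$ as well (direct count on the open subscheme of $\mathbb{A}^d$), the ratio is $1 + O(q^{-1/2})$ uniformly in $g$ sufficiently large, whence $\delta^{\pm}(q,\omega_G) = 1 + O(q^{-1/2}) \to 1$ as $q\to\infty$. The main obstacle, absorbed from \cite{EVW}, is producing the uniform-in-$g$, exponential-in-$i$ Betti number bound for the Hurwitz space. In the present twisted setting the underlying $\overline{\F_q}$-scheme is the \emph{same} Hurwitz space, so the Betti-number bound transfers directly; the Frobenius twist only permutes eigenvalues of matching absolute value, so Deligne's bound still applies with the same constants. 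The hypothesis $\ell$ odd enters through the EVW comparison, which requires $|B|$ odd.
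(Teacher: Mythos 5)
Your proposal is correct and follows essentially the same route as the paper: identify the numerator with the $\F_q$-points of the fiber of the Weil-pairing map over $\omega_G$ inside $\Conf_{g,G}$, invoke Yu's full-monodromy result so that Proposition \ref{connectedgeometricfibers} gives geometric connectedness, transfer the uniform Betti-number bounds from \cite{EVW} via the comparison isomorphism with the Hurwitz space $\mathrm{Hn}^c_{B\rtimes\langle\pm 1\rangle,n}$, and conclude by the Lefschetz trace formula. The only difference is that you spell out the smoothness, Poincar\'e duality, and Deligne weight-bound bookkeeping that the paper leaves implicit.
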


\begin{proof}

First, note that $\avg(G,\omega_G,g,q)\cdot|{\rm Conf}_g(\F_q)|$ is simply equal to the number of points on the subscheme $Y$ of $\textrm{Conf}_{g,G}$ which maps to
$\omega_G$ under the natural map to $(\wedge^2G)(1)$. By a result of Yu \cite{Yu}, the monodromy condition in Lemma \ref{connectedgeometricfibers} is satisfied, so 
$Y$ is geometrically connected. Moreover, by the discussion above the $\ell'$-adic cohomology of $\textrm{Conf}_{g,G}$ is the same as that of $\textrm{Hn}^c_{B,g}$ where 
$B=G(\ol{\F_q})\rtimes \Z/2\Z$ and $c$ is the conjugacy class of all involutions.  Thus, by \cite[Lemma 7.8]{EVW} we see that for all $i>0$, there is an integer $C(\ell^n)$ satisfying
$\dim H^i(\rm{Conf}_{g,G},\Q_{\ell'})\leq  C(\ell^n)^{i+1}$. The same bound therefore holds on the cohomology of $Y$. Thus, by the Lefschetz
trace formula we get that for $q>2C(\ell^n)^2$, $\#Y(\F_q) = q^n(1+O(C(G,\ell^n)/\sqrt{q}))$. The result follows since $|{\rm Conf}_g(\F_q)| = q^n-q^{n-1}$.

\end{proof}

\begin{remark} 

Note that if we stopped keeping track of $\omega_G$ and only cared about \'{e}tale group scheme, that the number of surjections to a group scheme $G$ approaches
$\#(\wedge^2G)(1)(\F_q)$.  If $G$ is a constant group scheme $\underline{B}$, this amounts to counting elements of $\wedge^2B$ which are killed by $q-1$. This is consistent
with the random model considered by Garton, and explains the failure of ordinary Cohen-Lenstra to hold if $q\not\equiv 1$ mod $\ell$.

\end{remark}

\section{Applications}
\subsection{Joint distribution}

Fix positive integers $n_1,\cdots,n_k.$  What is the joint distribution of the finite abelian $\ell$-groups $A(\F_{p^{n_1}})_\ell,\cdots,A(\F_{p^{n_k}})_\ell$ as $A$ varies through a family $V_g$ of $g$-dimensional principally polarized abelian varieties?

\subsubsection{\'{E}tale group schemes refine joint moments}
Let $n = \mathrm{lcm}(n_1,\cdots,n_k).$  Fix $G_1, G_2,\cdots,G_k$ finite abelian $\ell$-groups.  Let $M_n(A)$ denote the $\ZZ_\ell[x] / (x^n - 1)$-module $A(\F_{p^n})_\ell$ with is natural Frobenius action. Let $S_{G_1,\cdots,G_\ell}$ denote the set of isomorphism classes of $\ZZ_\ell[x] / (x^n - 1)$-modules $M$ for which 
$$M[x^{n_1} - 1] \cong G_1,\cdots, M [x^{n_k} - 1] \cong G_k.$$
Then

$$\mathrm{Prob}_{A \in V_g(\F_p)} \left( A(\F_{p^{n_1}})_\ell \cong G_1, \cdots , A(\F_{p^{n_k}})_\ell \cong G_k \right) = \sum_{M \in S_{G_1,\cdots,G_k}} \mathrm{Prob}_{A \in V_g(\F_p)} \left( M_n(A) \cong M \right).$$

So the distribution $A(\F_{p^n})_\ell$ as a $\ZZ_{\ell}[x]/(x^n - 1)$-module is a strictly more refined statistic than the joint distribution of $A(\F_{p^{n_1}}),\cdots,A(\F_{p^{n_k}}).$

\subsection{Results for the universal family of hyperelliptic curves} 
In this subsection, we spell out the consequences of our main theorems for the universal family of hyperelliptic curves in one special case.  For $x \in \mathrm{Conf}_g(\F_q),$ let $C_x$ denote the associated hyperelliptic curve and let $C_x^{\sigma}$ denote its quadratic twist.

For the ring $R = \ZZ_\ell[x] / (x^2 - 1)$ and finite $R$-module $M,$ let $M^{\pm}$ denote the $\pm 1$-eigenspaces of multiplication by $x.$

\begin{prop}\label{independence}
Suppose $\ell \nmid q^2 - 1$ and that $\ell \neq 2.$  Fix $\epsilon > 0.$  Fix a finite set $S$ of finite abelian $\ell$-groups.  Let $M_{A,B}$ denote the unique $R$-module whose $+1$-eigenspace equals $A$ and whose $-1$-eigenspace equals $B.$ There exists $Q(S)\gg 0$ such that if $q,g \geq Q(S)$ and $A,B \in S,$

$$\left| \mathrm{Prob}_{x \in \mathrm{Conf}_g(\F_q)} \left( \mathrm{Jac}(C_x)(\F_q)_\ell \cong A \text{ and } \mathrm{Jac}(C_x^{\sigma})(\F_q)_\ell \cong B \right) - \frac{c_R}{\# \mathrm{Aut}_R(M_{A,B})} \right| < \epsilon,$$

where $c_R$ is the normalizing constant from Theorem \ref{convmeas}.  
%In particular, for $q \geq Q(S),$
%\begin{align*}
%&| \textrm{      }\mathrm{Prob} \left( \mathrm{Jac}(C_x)(\F_q)_\ell \cong A, \mathrm{Jac}(C_x^{\sigma})(\F_q)_\ell \cong B \right) \\
%&  -\mathrm{Prob} \left( \mathrm{Jac}(C_x)(\F_q)_\ell \cong A \right) \cdot  \mathrm{Prob} \left( \mathrm{Jac}(C_x^{\sigma})(\F_q)_\ell \cong B \right)  | \\
%&< 3 \epsilon,
%\end{align*}

i.e. the class groups $\mathrm{Jac}(C_x)(\F_q)_\ell$ and $\mathrm{Jac}(C_x^{\sigma})(\F_q)_\ell$ behave almost independently for $g$ sufficiently large.
\end{prop}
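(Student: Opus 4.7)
The plan is to reduce the joint distribution statement to an instance of Corollary \ref{momentsare1general} combined with Proposition \ref{approximatemomentsimpliesapproximatelyCL}, applied to the polynomial $P(x) = x^2 - 1.$

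First I would identify the joint object as a single $R$-module. The $\F_q$-points of $\mathrm{Jac}(C_x)$ and of its quadratic twist $\mathrm{Jac}(C_x^{\sigma})$ are, respectively, the kernels of $F - 1$ and $F + 1$ acting on the $\ell$-adic Tate module $T_\ell \mathrm{Jac}(C_x),$ where the second identification uses the standard fact that quadratic twisting negates Frobenius on the geometric Tate module. Thus the pair $\left(\mathrm{Jac}(C_x)(\F_q)_\ell,\,\mathrm{Jac}(C_x^\sigma)(\F_q)_\ell\right)$ is recovered as the $\pm 1$-eigenspaces of $F$ acting on the $R$-module $G_x := \coker\bigl(P(F) \mid T_\ell \mathrm{Jac}(C_x)\bigr).$ Since $\ell$ is odd, $P(x)=(x-1)(x+1)$ gives $R \cong \Z_\ell \times \Z_\ell$ as a product of maximal orders, so every finite $R$-module $M$ is canonically $M^+ \oplus M^-$; in particular the module $M_{A,B}$ is well defined and unique, and the event $\{G_x \cong M_{A,B}\}$ is precisely the event $\{\mathrm{Jac}(C_x)(\F_q)_\ell \cong A\text{ and }\mathrm{Jac}(C_x^\sigma)(\F_q)_\ell \cong B\}.$

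Next I would invoke the moment calculation. Since $\ell \nmid q^2 - 1 = P(q),$ the Weil-pairing decoration $\omega_{G_x} \in (\wedge^2 G_x)(1)(\F_q)$ is forced to be zero, so keeping track of $\omega$ is automatic and the relevant category $\mathcal{E}$ reduces to the category of finite $R$-modules. For any finite $R$-module $H,$ Corollary \ref{momentsare1general} applied with this $P$ gives
\[
\lim_{q \to \infty} \lim_{g \to \infty} \mathrm{Expectation}_{x \in \mathrm{Conf}_g(\F_q)}\bigl(\#\mathrm{Surj}_R(G_x,H)\bigr) = 1,
\]
where the identification of $G_x$ with the cokernel coming from hyperelliptic Jacobians uses the comparison between $\mathrm{Conf}_{g,G}$ and the EVW Hurwitz space, together with Yu's monodromy result cited there. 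In particular, for any fixed finite subset $S' \subset \mathcal{E}$ and any $\delta > 0,$ one has the approximate moment bound (\ref{geometryapproximatemoments}) for $q,g$ sufficiently large in terms of $S'$ and $\delta.$

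Finally I would feed this into Proposition \ref{approximatemomentsimpliesapproximatelyCL}. Since $R = \Z_\ell \times \Z_\ell$ is a product of maximal orders, Lemma \ref{productshavefewenlargements} gives the few-enlargements property, so the proposition applies: for the finite set $S \subset \mathcal{E}$ consisting of all modules $M_{A,B}$ with $A,B \in S$ and any $\epsilon > 0,$ there is a finite $S' \subset \mathcal{E}$ and $\delta > 0$ such that $\delta$-approximate moments on $S'$ force $|\nu_g(M_{A,B}) - \mu_R(M_{A,B})| < \epsilon.$ Choosing $Q(S)$ large enough that both the approximate moment bound on $S'$ (from Corollary \ref{momentsare1general}) and the conclusion of Proposition \ref{approximatemomentsimpliesapproximatelyCL} hold for $q,g \geq Q(S),$ and using $\mu_R(M_{A,B}) = c_R/\#\Aut_R(M_{A,B})$ from Theorem \ref{convmeas}, yields the claim.

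The main obstacle, and the only non-formal step, is the twist identification in the first paragraph: making precise that $F$ on $T_\ell \mathrm{Jac}(C_x^\sigma)$ is conjugate (inside $\GSp$ of the geometric Tate module of $C_x$) to $-F$ on $T_\ell \mathrm{Jac}(C_x),$ so that the $R$-module $G_x$ really governs both eigenspaces simultaneously. Everything else is a direct citation of results already established in the paper.
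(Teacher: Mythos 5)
Your proposal is correct and follows essentially the same route as the paper: identify the joint condition on $\mathrm{Jac}(C_x)(\F_q)_\ell$ and $\mathrm{Jac}(C_x^{\sigma})(\F_q)_\ell$ with the condition that the $R=\Z_\ell[x]/(x^2-1)$-module attached to $C_x$ (equivalently $\mathrm{Jac}(C_x)(\F_{q^2})_\ell$ with its Frobenius action, since the twist negates Frobenius) is isomorphic to $M_{A,B}$, note that $\ell\nmid P(q)=q^2-1$ kills the symplectic decoration, and then combine Corollary \ref{momentsare1general} with Proposition \ref{approximatemomentsimpliesapproximatelyCL} and Theorem \ref{convmeas}. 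You are in fact slightly more explicit than the paper about verifying the few-enlargements hypothesis via Lemma \ref{productshavefewenlargements} and about the twist identification, which the paper states without comment.
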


\begin{proof}

Note that  $$\mathrm{Jac}(C_x)(\F_q)_\ell \cong A\textrm{ and } \mathrm{Jac}(C_x^{\sigma})(\F_q)_\ell \cong B  \iff \mathrm{Jac}(C_x)(\F_{q^2})_\ell\cong M_{A,B}.$$

By Proposition \ref{approximatemomentsimpliesapproximatelyCL} and Proposition \ref{momentsare1general}, for $q$ sufficiently large we have
$$\left|\mathrm{Prob}\left(\mathrm{Jac}(C_x)(\F_{q^2})_\ell\cong M_{A,B}\right) - \frac{c_R}{\# \Aut_R(M_{A,B})} \right|<\epsilon.$$
%
%Similarly, by the work of \cite{EVW},
%
%$$\left|\mathrm{Prob}\left(\mathrm{Jac}(C_x)(\F_q)_\ell\cong A\right) - \frac{c_{\ZZ_{\ell}}}{\# \Aut_{\Z_{\ell}}A} \right|<\epsilon.$$
%Analogously, by carrying out the same proof over the quadratic twist of the family $\mathcal{C}/\textrm{Conf}_n$ appearing in \S4, we also have
%$$\left|\mathrm{Prob}\left(\mathrm{Jac}(C^{\sigma}_x)(\F_q)_\ell\cong B\right) - \frac{c_{\ZZ_{\ell}}}{\# \Aut_{\Z_{\ell}}B} \right|<\epsilon.$$

Because $R$ splits as a product, 
$$\frac{c_R}{\# \Aut_R(M_{A,B})} = \frac{c_{\ZZ_\ell}}{\# \Aut_{\ZZ_\ell}(A)} \cdot \frac{c_{\ZZ_\ell}}{\# \Aut_{\ZZ_\ell}(B)}.$$

The result follows.
\end{proof}

\end{document}